\newtheorem{theorem}{Theorem}[section]
\newtheorem{lemma}{Lemma}[section]
\newtheorem{remark}{Remark}[section]
\newcommand{\R}{\mathbb{R}}
\newcommand{\Z}{\mathbb{Z}}
\newcommand{\N}{\mathbb{N}}
\newcommand{\F}{\mathcal{G}}
\newcommand{\LL}{\mathscr{L}}
\newcommand{\mv}{\bar{\varphi}} 
\newcommand{\ml}{\bar{\lambda}} 
\newcommand{\ev}{z} 
\newcommand{\evt}{\xi} 
\newcommand{\tv}{\psi} 
\newcommand{\sad}{\bar{s}}
\newcommand{\ds}{k_{\sad}}
\newcommand{\ddelta}{k_{\zeta}}
\newcommand{\sA}{\sigma_{A}}
\newcommand{\sB}{\sigma_{B}}
\newcommand{\dv}{\varphi}
\newcommand{\tdmv}{\psi_h} 
\newcommand{\dd}{~{\rm d}}
\newcommand{\drp}{\gamma}
\newcommand{\Co}{C^1}    
\newcommand{\Cz}{C}
\newcommand{\U}{\mathscr{A}}  
\newcommand{\X}{X}  
\newcommand{\Y}{Y}
\title{Convergence of the Discrete Minimum Energy Path}
\author{
Xuanyu Liu\footnote{
    {\tt xyliu9535@mail.bnu.edu.cn}.
	School of Mathematical Sciences, Beijing Normal University, Beijing 100875, China.},
~ Huajie Chen\footnote{
Corresponding.
    {\tt chen.huajie@bnu.edu.cn}.
	School of Mathematical Sciences, Beijing Normal University, Beijing 100875, China.
    This author was partially supported by the National Natural Science Foundation of China (No. 12371431).}
~ and~ Christoph Ortner\footnote{
    {\tt ortner@math.ubc.ca}.
	Department of Mathematics, University of British Columbia, 1984 Mathematics Road, Vancouver, BC, Canada V6T 1Z2.}
}
\date{}
\begin{document}
\maketitle
 
\begin{abstract}
The minimum energy path (MEP) describes the reaction pathway between two stable configurations of a physical system, as well as the rate of reaction. 
The nudged elastic band (NEB) method is one of the most commonly used schemes to compute MEPs numerically. It approximates an MEP by a discrete set of configuration images, where the discretization size determines both computational cost and accuracy of the MEP approximation. 
We consider a discrete MEP to be a stationary state of the NEB method and prove an optimal convergence rate of the discrete MEP with respect to the number of images.
Numerical simulations for the transitions of prototypical model systems support the theory.
\end{abstract} 

\vskip 0.2cm

\quad {\bf MSC codes.} 37M05, 65L07, 65L20

\section{Introduction}
\label{sec:introduction}
\setcounter{equation}{0}

In computational chemistry, reaction mechanisms are often represented by a reaction path on the potential energy landscape. Those paths determine the thermodynamic and kinetic properties of chemical reactions.
The minimum energy path (MEP) has been most widely used to represent the reaction path, giving the route that needs the least amount of work for the system to undergo the transition. 
The MEP connects two local minimizers and passes through one or more transition states (saddle points). The energy barrier and vibrational frequencies at the transition state(s) can be used to calculate the rate of the reaction~\cite{berglund2013kramers,hanggi1990reaction,vineyard1957frequency}.

The most widely used techniques for finding the MEP are the nudged elastic band (NEB) method \cite{2000climbingNEB,jonsson1998NEB} and the string method \cite{2002string,2007string,2013climbingstring}.
They both iteratively evolve a discrete path of images in projected steepest descent directions, while keeping a smooth distribution of the images along the path.
The number of images along the path determines the accuracy and efficiency of the simulation, especially when the energy functional is expensive to evaluate (for example, when the system is described by a quantum mechanical model).
This makes it particuarly important to obtain sharp error bounds of the MEP approximation with respect to the number of images.

Notwithstanding the importance of such a convergence analysis, there is limited work devoted to the analysis of the MEP.
In \cite{cameron2011string}, the authors gave a qualitative analysis of the evolution of the path by the string method from a dynamical systems perspective, characterizing the set of limiting curves and establishing conditions under which the limiting curve is indeed a MEP.
In \cite{2018LuskinStabString}, the authors proposed a uniform asymptotic stability of the MEP in the sense that any curve near the MEP can be arbitrarily close to it in the Hausdorff distance under the gradient decent dynamics, and showed that the string method initialized in a neighborhood of the MEP can converge to an arbitrarily small neighborhood of the limiting MEP. 
The notion of stability employed in these works cannot give an explicit relationship between the deviation to the MEP and its force, hence it may not be able to provide an explicit {\it error bound} with respect to the number of images. 

To prepare for a comprehensive and sharp numerical analysis of MEP approximation schemes, we began in \cite{MEPStab} to establish a new framework for studying the stability of MEPs. 
We showed that the displacement of a curve to the MEP can be directly bounded by its force under appropriate weighted function space norms. 
This result in particular directly implies the stability of MEPs under perturbations of the potential energy landscape. 
It also suggests a path towards studying the convergence of the discrete MEP via establishing an analogous stability for the discrete MEP. 

Discrete MEP stability does not follow in a straightforward manner from existing results and will be the main technical achievement of the present work. Once this is established we can then deduce sharp convergence rates of discrete MEPs to their continuous limits.
We will indeed obtain an optimal convergence rate under mild and natural technical assumptions on the limit MEP. More precisely, we will show that (see Theorem \ref{thm2:conv})
\begin{equation*}
\big\| \mv - \bar{\phi}_M \big\| \leq C M^{-1},
\end{equation*}
where $\mv$ and $\bar{\phi}_M$ are the MEP and the discrete MEP respectively, $\|\cdot\|$ denotes a discrete $C^1$-type-norm, and $M$ is the number of images used to discretize the path.
Although we focus on the NEB framework in this work, the convergence result could be generalized to other methods that find the MEP, for instance the string method (see Remark \ref{rem:conv_string}).
To our best knowledge, this is the first result that gives an explicit convergence rate for the MEP with respect to the discretization parameters.

\vskip 0.2cm

\textbf{Outline.} 
The rest of this paper is organized as follows. 
In Section \ref{sec:results}, we present the main results of this paper, including the convergence result and numerical experiments on prototypical example systems.
In Section \ref{sec:conclusion}, we give some conclusions.
In Section \ref{sec:dmep}, we present detailed proofs for the convergence results.

\vskip 0.2cm

\textbf{Notation.}  
We denote the Euclidean norm and $\ell^\infty$-norm of a vector, respectively, by $\lvert \cdot \rvert$ and $|\cdot|_{\infty}$.
The symbol $\|\cdot\|_{\infty}$ is used to denote the matrix norm induced by $|\cdot|_{\infty}$.
For $b>a$, we denote by $C\left([a,b];\R^{N}\right)$ the space of continuous curves in the configuration space $\R^{N}$, equipped with the norm $\lVert\varphi\rVert_{C([a,b];\R^{N})} := \sup_{\alpha\in[a,b]} |\varphi(\alpha)|_{\infty}$; 
and $C^1\left([a,b];\R^{N}\right)$ the space of continuously differentiable curves with the norm $\lVert\varphi\rVert_{\Co([a,b];\R^{N})} := \lVert\varphi\rVert_{C([a,b];\R^{N})} + \lVert \varphi' \rVert_{C([a,b];\R^{N})}$. 
Let $X$ and $Y$ be Banach spaces equipped with norms $\|\cdot\|_X$ and $\|\cdot\|_Y$.
We will denote by $\LL(X,Y)$ the Banach space of all linear bounded operators from $X$ to $Y$ with the operator norm $\lVert\cdot\rVert_{\LL(X,Y)}$.
For a functional $\mathscr{F}\in C^1(X)$ (in the sense of Fr\'echet), we will denote its first variation by $\delta\mathscr{F}(x)v$ with $v\in X$. 

We will use $C$ to denote a generic positive constant that may change from one line to the next.
In the error estimates, $C$ will always be independent on the discretization parameters or the choice of test functions. The dependencies of $C$ on model parameters (in our context, the energy landscape) will normally be clear from the context or stated explicitly.

\section{Main results}
\label{sec:results}
\setcounter{equation}{0}

\subsection{Minimum energy path}
\label{sec:mep}

Let $E :\R^{N}\rightarrow \R$ be a potential energy functional with $N\in\N$ the dimensionality of the configuration space, which could encode atomic positions, structure of crystal lattices, and many other examples.
Throughout this paper we assume that $E \in C^4(\R^N)$.
This regularity assumption is required because our analysis requires detailed control of perturbations of the eigenvalues and eigenvectors of the Hessian $\nabla^2 E$. 

Given an energy functional $E$, we call $y\in\R^N$ a critical point if $\nabla E(y) = 0$. 
We call a critical point $y$ a strong local minimizer if the Hessian $\nabla^2 E(y)\in\R^{N\times N}$ is positive definite, and an index-1 saddle point if $\nabla^2 E(y)$ has exactly one negative eigenvalue while all the other eigenvalues are strictly positive. For the sake of brevity we will omit the qualifiers ``strong" and ``index-1" and simply say ``local minimizer" and ``saddle point".
We assume throughout that $E$ has at least two local minimizers on the energy landscape denoted, respectively, by $y^A_{\rm M}\in\R^N$ and $y^B_{\rm M}\in\R^N$. 

A minimum energy path (MEP) is a curve $\varphi\in C^1\big([0, 1];\R^{N}\big)$ connecting $y^A_{\rm M}$ and $y^B_{\rm M}$, whose tangent is everywhere parallel to the gradient except at the critical points. 
To give a rigorous definition of the MEP, we first define two projection operators $P_v~,P_v^{\perp}:\R^N\rightarrow\R^N$ with a given a vector $v\in\R^N \setminus \{0\}$, 
by $\displaystyle P_v := v v^{\rm T} /|v|^2$ and $ P^\perp_{v} := I - P_v$, where $I\in\R^{N\times N}$ is the identity matrix.
Define the admissible class for 
``regular" curves connecting the minimizers $y^A_{\rm M}$ and $y^B_{\rm M}$ as
\begin{align*}
\U := \Big\{ \varphi \in C^1\big( [0,1];\R^{N} \big) :~  \varphi(0) = y^A_{\rm M},~\varphi(1) = y^B_{\rm M},~\varphi'(\alpha) \neq 0 ~~\forall~\alpha\in[0,1] \Big\} .
\end{align*}
Then a MEP connecting the minimizers $y^A_{\rm M}$ and $y^B_{\rm M}$ is a solution of the following problem: Find $ \varphi \in \U $ such that
\begin{subequations} 
\label{mep}
	\begin{numcases}{}
	\label{mep1}\hspace{5pt}
	P^\perp_{\varphi'(\alpha)}\nabla E \big( \varphi(\alpha) \big) = {0} 	\hspace{5ex}~\forall~\alpha \in [0,1], \\[1ex]
	\label{mep2}\hspace{5pt}
	|\varphi'(\alpha)| - L(\varphi)= 0 \hspace{7ex}~\forall~\alpha \in [0,1],
	\end{numcases}
\end{subequations}
where $L : C^1([0,1];\R^N) \rightarrow \R$ is the length operator given by $L(\varphi) := \int_0^1 \lvert \varphi'(s) \rvert \dd s$.
Equation \eqref{mep2} enforces the curve to be parameterized by normalized arc length, which removes the redundancy due to re-parameterization. With this additional constraint one intuitively expects that the MEP is locally unique, at least under some further natural assumptions that we now formalize: 

If $\mv$ is a solution of \eqref{mep}, since $y^A_{\rm M}$ and $y^B_{\rm M}$ are local minimizers, there exists an $\sad\in(0,1)$ with $y_{\rm S}=\mv(\sad)\in\R^N$ such that the energy $E(y_{\rm S})$ reaches the maximum along the MEP. 
This implies that $\nabla E(y_{\rm S})$ vanishes in the tangent direction $\mv'(\sad)$ and thus it is a critical point. 
Since the energy $E(y_{\rm S})$ is a maximum along the tangent $\mv'(\sad)$, we may generically expect that the Hessian $\nabla^2E(y_{\rm S})$ has at least one negative eigenvalue.
For the sake of simplicity of the analysis, we will assume throughout this paper that  
\begin{flushleft}
\label{assump:index1saddle}
{\bf (A)}~
$y_{\rm S}$ is {\em the only} critical point between $y^A_{\rm M}$ and $y^B_{\rm M}$ along the MEP $\mv$. Moreover, $y^A_{\rm M}, y^B_{\rm M}$ are strong minimizers, while $y_{\rm S} = \mv(\sad)$ is an {\em index-1 saddle}.
\end{flushleft}

Although ${\bf (A)}$ is natural and will be satisfied by {\em many} (if not most) MEPs one encounters in practice, there are also cases where this assumption fails. For example, examples are given in \cite{MAP} where there is more than one {\em index-1 saddle} along an MEP.
Our theory can be generalized to these cases by adjusting the formulations, provided that all critical points along the MEP satisfy certain stability conditions. 

We observe by a direct calculation (see Lemma \ref{lemma:lambda} for details) that, if $\mv\in C^2\big( [0,1];\R^{N} \big)$ solves \eqref{mep}, then $\mv'(0)$, $\mv'(\sad)$ and $\mv'(1)$ are eigenvectors of, respectively the Hessians $\nabla^2 E(y^A_{\rm M})$, $\nabla^2 E(y_{\rm S})$ and $\nabla^2 E(y^B_{\rm M})$.
This implies that the MEP has to go through the critical points in the direction of some eigenvector of the corresponding Hessian.
The following assumption formalizes the requirement that, to highest order, there is a unique optimal path to exit the energy minimizer.

\begin{flushleft}
\label{assump:simplelowest}
{\bf (B)}~
Let $\sA$ and $\sB$ denote the eigenvalues associated, respectively, with the eigenvectors $\mv'(0),\mv'(1)$. We assume (i) that they are the lowest eigenvalues of $\nabla^2 E(y^A_{\rm M})$ and $\nabla^2 E(y^B_{\rm M})$; and (ii) that they are simple.
\end{flushleft}

Assumption {\bf (B)} does limit the range of MEPs that are covered by our analysis. It is a crucial ingredient to obtain a strong linearized stability result and hence optimal convergence rates for the discrete MEP. We will observe in numerical tests that only weaker results can be expected when {\bf (B)} is not satisfied.

\subsection{The discrete MEP}
\label{sec:dMEP}

We now describe the discretization of a continuous MEP, by using a discrete path of images connecting the two minimizers. 
Let $M+1\in\Z_+$ be the number of images and $h:=\frac{1}{M}$ be the corresponding meshsize.
Let
\begin{align*}
\U_h := \Big\{ \phi_h := \{\phi_{h,k}\}_{k=0}^{M} \in (\R^N)^{M+1} :~ \phi_{h,0}=y^A_{\rm M}, ~\phi_{h,M}=y^B_{\rm M} ,
~{\rm and}~ \phi_{h,j}\neq\phi_{h,k}~\forall~j\neq k \Big\} 
\end{align*}
be the space of admissible discrete paths connecting the two minimizers $y^A_{\rm M}$ and $y^B_{\rm M}$ with $M+1$ images.
For a discrete $\phi_h\in\U_h$, we define the following {\it approximate tangent} at the images through an upwind scheme
\begin{align}
\label{eq:upwind}
(D_h\phi_h)_k := \left\{ 
\begin{array}{ll}
\displaystyle
\frac{\phi_{h,k+1} - \phi_{h,k}}{h}  
& {\rm if} ~ k = 0 ~ {\rm or} ~E(\phi_{h,k+1}) > E(\phi_{h,k}) > E(\phi_{h,k-1}),
\\[1ex]
\displaystyle
\frac{\phi_{h,k} - \phi_{h,k-1}}{h}
& {\rm if} ~  k = M ~ {\rm or}~ E(\phi_{h,k-1}) > E(\phi_{h,k}) >E(\phi_{h,k+1}),
\\[1ex]
\displaystyle
\frac{\phi_{h,k+1} - \phi_{h,k-1}}{2h}    
& {\rm otherwise}.
\end{array}\right.  
\end{align}
We further define the normalized tangent direction as
\begin{align}
\nonumber
& \hat{\tau}_k := \frac{(D_h\phi_h)_k}{|(D_h\phi_h)_k|} .
\end{align}

With a given discretization $\U_h$, the NEB method finds the MEP by evolving the discrete path under a driving force: Find $\phi_h \in\Co\big([0,+\infty);\U_h \big)$, such that
\begin{equation}
\label{eq:NEB_evolution}
\left \{~
\begin{aligned}
    \dot{\phi}_h(t) &= \F_h \big( \phi_h(t) \big) \qquad \forall~t>0 ,
    \\[1ex]
    \phi_h(0) &= \phi^0_h 
\end{aligned} 
\right.
\end{equation}
where $\phi^0_{h}\in\U_h$ is an initial discrete path.
Here the driving force $\F_h:\U_h \rightarrow (\R^N)^{M+1}$ is defined by 
\begin{equation}
\label{eq:NEBf}
\big( \F_h (\phi_h) \big)_k := \left\{~
\begin{aligned}
& 0 \hskip 2.5cm {\rm for}~k=0,M,
\\[1ex]
& - P_{\hat{\tau}_k}^\perp \nabla E(\phi_{h,k}) + c h^{-1} \Big( | \phi_{h,k}-\phi_{h,k+1} | - | \phi_{h,k}-\phi_{h,k-1} | \Big) \hat{\tau}_k 
\\[1ex]
& ~~ \hskip 2.5cm {\rm for}~k=1,\cdots,M-1,
\end{aligned}
\right.
\end{equation}
where $c$ is a preset parameter specifying the balance between the tangential force and the projected downward pull from the curvature of the potential barrier \cite{jonsson1998NEB}.
In \eqref{eq:NEBf}, the force perpendicular to the tangent $\hat{\tau}_k$ is to push the path towards the MEP, while the force parallel to the tangent $\hat{\tau}_k$ is to enforce an approximate equidistribution of the images along the path.
Let $\bar{\phi}_h\in\U_h$ be a stationary solution of the NEB evolution equation \eqref{eq:NEB_evolution}.
Then $\bar{\phi}_h$ solves the equation
\begin{eqnarray}
\label{dMEP:NEB}
\F_h (\bar{\phi}_h) = {0},
\end{eqnarray}
and we will call it a discrete MEP.

It is intuitive to expect that the NEB evolution path converges to the MEP in the limit as $t \to \infty$ and $h \to 0$. 
In the present paper, we focus on the discretization error of the curve, i.e. the limit $h \to 0$, while the convergence of the evolution, i.e., the limit $t \to \infty$, will be considered in a separate work.

\subsection{Convergence of the discrete MEP}
\label{sec:convergence:dMEP}

The following theorem is the main result of this paper, stating the convergence rate of a sequence of discrete MEPs as $M \to \infty$. 
The proof of this theorem is given in Section~\ref{sec:dmep}.
We not only consider the convergence of the path (under some discrete $C^1$-norm), but also the convergence of the approximate energy barrier.
For a given path $\varphi\in\U$, its energy barrier can be written as 
\begin{equation*}
    \Delta E(\varphi) := \sup_{\alpha\in[0,1]} E\big(\varphi(\alpha)\big) - E\big(\varphi(0)\big).
\end{equation*}
Correspondingly, we can define the energy barrier of a discrete path $\phi_h\in\U_h$ by 
\begin{equation*}
    \Delta_h E(\phi_h) := \max_{0\leq k\leq M} E(\phi_{h,k}) - E(\phi_{h,0}) .
\end{equation*}

\begin{theorem}
\label{thm2:conv}
Let $\mv\in C^3\big( [0,1];\R^{N} \big)$ be a solution of \eqref{mep} satisfying Assumptions {\bf (A)} and {\bf (B)}.
Then, for sufficiently small $h$ (equivalently, sufficiently large $M$), there exists a solution $\bar{\phi}_h$ of \eqref{dMEP:NEB} such that
\begin{align}
\label{est2:dismep}
\max_{0\leq k\leq M} \big| (D_h\bar{\phi}_h)_k - \mv'(k h) \big| + \max_{0\leq k\leq M} \big| \bar{\phi}_{h,k} - \mv(k h) \big| 
& \leq C_{\rm p} h
\qquad{\rm and}
\\[1ex]
\label{est:Eb}
\big|\Delta E(\mv) - \Delta_h E(\bar{\phi}_h)\big| 
& \leq C_{\rm e} h^2 ,
\end{align}
where $C_{\rm p}$ and $C_{\rm e}$ are positive constants depending only on $E$ and $\mv$.
\end{theorem}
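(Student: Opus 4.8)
\emph{Overall strategy.} The plan is to prove \eqref{est2:dismep} by a consistency--stability argument for the nonlinear equation \eqref{dMEP:NEB}, treating the continuous MEP $\mv$ restricted to the grid as an approximate solution, and then to obtain the improved rate \eqref{est:Eb} by a separate, sharper argument that exploits the vanishing of $\nabla E$ at the saddle. For Step 1 (consistency), let $I_h\mv := \{\mv(kh)\}_{k=0}^M \in \U_h$ be the nodal interpolant of $\mv$. One bounds the residual $\F(I_h\mv)$ in the norm $\|\cdot\|_\star$ natural for the range of $\F$: since $\mv$ solves \eqref{mep1}, the component of $\F(I_h\mv)_k$ orthogonal to $\hat\tau_k$ equals $-\big(P^\perp_{\hat\tau_k}-P^\perp_{\mv'(kh)}\big)\nabla E(\mv(kh))$, which is controlled by the deviation of the finite-difference tangent $\hat\tau_k$ from the true unit tangent at $kh$; a Taylor expansion (using $\mv\in C^3$) shows this deviation is $O(h^2)$ where the centered difference in \eqref{eq:upwind} applies and $O(h)$ where the forward/backward difference applies, hence $O(h)$ overall. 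Since $\mv$ is parameterised by arc length (equation \eqref{mep2}) and $\mv'\cdot\mv''=0$, the segment lengths satisfy $|\mv(kh)-\mv((k\pm1)h)|=hL(\mv)+O(h^3)$, so the component of $\F(I_h\mv)_k$ parallel to $\hat\tau_k$ is $O(h^2)$. Altogether $\|\F(I_h\mv)\|_\star\le Ch$.

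\emph{Step 2 (discrete stability -- the crux).} The key step is to show that the linearisation $\delta\F(I_h\mv)$, restricted to discrete paths vanishing at $k=0,M$, is a map bounded below uniformly in $h$ from the discrete $C^1$-type space of \eqref{est2:dismep} to the range space, i.e.\ $\|\delta\F(I_h\mv)^{-1}\|_{\LL(\star,\,\Co)}\le C$. My plan is to recognise $\delta\F(I_h\mv)$ as a consistent finite-difference discretisation of the continuous linearised MEP operator, whose coercivity/invertibility in suitable weighted function-space norms is precisely the stability result of \cite{MEPStab} (which relies on assumptions \textbf{(A)}, \textbf{(B)}, on Lemma \ref{lemma:lambda}, and on the $C^2$ control of the spectral data of $\nabla^2E$ near the critical points -- the reason for assuming $E\in C^4$); the discrete stability then follows for small $h$ by a perturbation argument once the discretisation error of the linearised operator is shown to vanish with $h$. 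Two features demand care. First, the linearisation splits into a zeroth-order algebraic block $-P^\perp_{\mv'}\nabla^2E(\mv)P^\perp_{\mv'}$ acting on normal perturbations and a degenerate discrete second-order (Laplacian-type) block, coming from the $ch^{-1}(\cdots)$ spacing term in \eqref{eq:NEBf}, acting on tangential perturbations; handling the latter requires a discrete Poincar\'e-type inequality compatible with the continuous transverse stability for the former. Second, the upwind rule \eqref{eq:upwind} is not differentiable at nodes where the scheme switches; however, for small $h$ the function $E\circ\mv$ is strictly monotone on each side of $\sad$ (because $\nabla E(\mv(\alpha))$ is parallel to $\mv'(\alpha)$ and nonzero for $\alpha\ne\sad$ under \textbf{(A)}), so by Step 1 and a continuity argument the scheme uses the forward difference below the discrete maximiser and the backward difference above it, with the centered case occurring at most at a single node near $\sad$; this localises the non-smoothness, which can then be absorbed into the stability margin by freezing the scheme pattern and verifying \emph{a posteriori} that it is the one realised by $\bar\phi_h$. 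I expect this step to be the main obstacle.

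\emph{Step 3 (existence and the $O(h)$ bound).} Combining the consistency bound of Step 1, the stability bound of Step 2, and the local Lipschitz continuity of $\delta\F$ near $I_h\mv$ (uniform in $h$, using $E\in C^4$), a quantitative inverse function theorem yields, for $h$ small, a solution $\bar\phi_h$ of \eqref{dMEP:NEB} with $\|\bar\phi_h-I_h\mv\|\le C\|\F(I_h\mv)\|_\star\le C_{\rm p}h$. Since $\bar\phi_{h,k}-\mv(kh)=(\bar\phi_h-I_h\mv)_k$ and $(D_h\bar\phi_h)_k-\mv'(kh)=\big(D_h(\bar\phi_h-I_h\mv)\big)_k+\big((D_hI_h\mv)_k-\mv'(kh)\big)$, with the first summand $O(h)$ by the $C^1$-type bound and the second $O(h)$ by finite-difference consistency, estimate \eqref{est2:dismep} follows.

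\emph{Step 4 (energy barrier).} For \eqref{est:Eb} I would again exploit that $\nabla E(\mv(\alpha))$ is parallel to $\mv'(\alpha)$. Write $e_k:=\bar\phi_{h,k}-\mv(kh)=e_k^\parallel+e_k^\perp$ with $e_k^\perp\perp\mv'(kh)$ and $|e_k|=O(h)$ from \eqref{est2:dismep}. A Taylor expansion of $E$ gives $E(\bar\phi_{h,k})=E(\mv(kh))+\nabla E(\mv(kh))\cdot e_k^\parallel+O(h^2)$, because $\nabla E(\mv(kh))\cdot e_k^\perp=0$; and since $\nabla E(\mv(kh))=(E\circ\mv)'(kh)\,\mv'(kh)/|\mv'(kh)|^2$, the linear term represents a reparameterisation, so $E(\bar\phi_{h,k})=(E\circ\mv)(kh+c_k)+O(h^2)$ for some $|c_k|=O(h)$. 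Using $E\circ\mv\le E(y_S)$ together with $(E\circ\mv)'(\sad)=0$ and $E\circ\mv\in C^2$, the right-hand side is $\le E(y_S)+O(h^2)$ for every $k$, while for an index $k_0$ with $|k_0h-\sad|\le h$ it is $\ge E(y_S)-O(h^2)$. Since $\bar\phi_{h,0}=\mv(0)$, this yields $|\delta E(\mv)-\delta E(\bar\phi_h)|=|E(y_S)-\max_{0\le k\le M} E(\bar\phi_{h,k})|\le C_{\rm e}h^2$, completing the proof.
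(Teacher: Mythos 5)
Your overall architecture (consistency, discrete stability, inverse function theorem, then a quadratic Taylor argument at the saddle for the barrier) matches the paper's, and your Steps 1, 3 and 4 are essentially sound — indeed your Step 4 is a slightly more elaborate but valid version of the paper's two-line argument using $\nabla E(\mv(\sad))=0$. The genuine gap is in Step 2, which you correctly identify as the crux but then dispose of with a claim that does not hold: that the uniform invertibility of the discrete linearisation follows from the continuous stability of \cite{MEPStab} ``by a perturbation argument once the discretisation error of the linearised operator is shown to vanish with $h$.'' The continuous linearised operator (see \eqref{def:dF}) is a \emph{degenerate first-order transport operator} in the normal directions, $A(\alpha)\drp^\perp-\ml(\alpha)(\drp^\perp)'$, with $\ml$ vanishing at $0$, $\sad$ and $1$; its inverse is bounded only between carefully weighted spaces, and there is no norm in which the discrete operator is a small perturbation of a boundedly invertible continuous one. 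Your description of the normal block as ``a zeroth-order algebraic block $-P^\perp_{\mv'}\nabla^2E\,P^\perp_{\mv'}$'' misses this transport term entirely — and that block is not even uniformly positive definite away from the critical points, so it cannot carry the stability on its own. The paper instead proves the discrete stability (Lemma \ref{le:stab_dmep}) from scratch: it diagonalises in the eigenbasis of $P^\perp_{\mv'}\nabla^2E\,P^\perp_{\mv'}$, rewrites the resulting recursions as the linear systems \eqref{eq:drp_perp1}--\eqref{eq:drp_perp4}, and bounds the inverses of the matrices $B_k$, $P_{h,\sad}$, $P_{h,\delta}$ region by region (near the minimizers, in the interior, near the saddle).

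A symptom of the gap is that your proof never uses assumption {\bf (B)}. In the paper, {\bf (B)} is exactly what makes the discrete stability work near the minimizers: it guarantees $\sigma_j=z_j(0)/\ml'(0)>1$, hence $\bar\sigma>1$ in \eqref{def:bar-sigma}, which is what makes the products $\prod_{s=1}^n \tfrac{s}{s+\bar\sigma}\sim n^{-\bar\sigma}$ summable in Lemma \ref{le:Ph-d} and keeps $\lVert P_{h,\delta}^{-1}\rVert_\infty$ bounded uniformly in $h$. The paper's numerical Example 1 shows that when {\bf (B)} fails the $C^1$-rate genuinely degrades (to $O(h^{1/2})$ in one case), so any argument that does not invoke {\bf (B)} cannot be correct. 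Relatedly, your consistency estimate is stated in an unspecified norm $\|\cdot\|_\star$; the stability analysis forces the weighted norm \eqref{def:Clh} (with weights $1/(\alpha_k(\alpha_k-1))$ and $1/(\alpha_k-\alpha_{\ds})$), and establishing $O(h)$ consistency in \emph{that} norm requires the residual to degenerate appropriately at the endpoints and the saddle, not merely to be pointwise $O(h)$. One further structural difference worth noting: rather than working directly with the nonsmooth NEB force $\F$, the paper analyses an auxiliary smooth operator $\mathscr{F}_h$ enforcing equidistribution via a length constraint, and only at the end verifies (via the monotonicity of $E$ along the converged images and \eqref{eq:eqdis}) that its solution also satisfies $\F(\bar\phi_h)=0$; your ``freeze the upwind pattern and verify a posteriori'' idea is the same in spirit and is fine.
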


\begin{remark}
The $\mathcal{O}(h)$ convergence rate in \eqref{est2:dismep} is optimal, since we only consider approximation of the tangent by first-order finite difference scheme \eqref{eq:upwind}.
\end{remark}

\begin{remark} 
\label{rem:conv_string}
Theorem \ref{thm2:conv} gives the convergence of the discrete MEP obtained by the NEB method, with the driving force $\F_h$ given in \eqref{eq:NEBf}.
It is likely that our analysis can be generalized to the discrete MEP obtained from other methods, particularly the string method.
For the string method, the discrete path $\phi_h$ is evolved by the driving force
\begin{equation*}
\big(\F_h^{\rm string}(\phi_h)\big)_k := - P_{(D_h\phi_h)_k}^\perp \nabla E(\phi_{h,k}) 
\qquad k=1,\cdots,M-1 ,
\end{equation*}
together with a redistribution of the images at each step during the evolution \cite{2007string}. 
Then the corresponding discrete MEP is defined by the stationary state of the equation $\F_h^{\rm string}(\bar{\phi}_h)=0$, such that the images on $\bar{\phi}_h$ are equally distributed along the path.
This gives the same path as that obtained by the NEB method.
However, most practical implementations of the string method employ cubic spline interpolations to obtain the tangent and additional work would be required to generalize our analysis.
\end{remark}

\begin{remark}
\label{remark:idea}
The critical ingredient in the proof of Theorem \ref{thm2:conv} will be the stability of the discrete MEP, or rather, stability of the discrete MEP equation. However, our strategy will {\em not} be to prove stability of the operator $\mathcal{G}_h$ defined in \eqref{eq:NEBf}. In Section~\ref{sec:stab_dmep} we will first reformulate the discrete MEP equation in terms of a carefully rescaled operator, for which we are then able to prove the stability in Theorem~\ref{le:stab_dmep}, which is our main new technical result.
\end{remark}

\subsection{Numerical experiments}
\label{sec:numerics}
We perform a range of numerical tests on model systems to support and extend our theoretical results.
All simulations are implemented in open-source {\tt Julia} packages {\tt JuLIP.jl} \cite{JuLIP}.
%
To test the decay of the numerical errors, results obtained by a very fine discretization with $1000$ images are used as the reference MEP. 

To support the theory developed in this paper, we test the convergence in the discrete $C^1$-norm error (see \eqref{est2:dismep}) and of the energy barrier error (see \eqref{est:Eb}) of the discrete MEP.
For completeness, we also test the convergence in a discrete $C$-norm (maximum norm). 
These three error measures are, respectively, denoted by
\begin{align*}
& e_{h,C^1} := \max_{0\leq k\leq M} \big| (D_h\bar{\phi}_h)_k - \mv'(k h) \big| + \max_{0\leq k\leq M} \big| \bar{\phi}_{h,k} - \mv(k h) \big| ,
\\[1ex]
& e_{h,E_{\rm b}} := \big|\Delta E(\mv) - \Delta_h E(\bar{\phi}_h)\big|
\qquad{\rm and}
\\[1ex]
& e_{h,C} := \max_{0\leq k\leq M} \big| \bar{\phi}_{h,k} - \mv(k h) \big|.
\end{align*}

\vskip 0.2cm

{\bf Example 1.}
Consider a toy model with the potential $E:\R\times\R_+\rightarrow\R$ with
\begin{equation*}
E(x,y) = \alpha (1-x^2-y^2)^2+ \frac{y^2}{x^2+y^2} \qquad{\rm with} ~ y\geq 0 ,
\end{equation*}
where the parameters $\alpha$ is chosen by three cases $1,~\frac{1}{4}$ and $\frac{1}{8}$.
In the first case $\alpha=1$, the eigenvalues of $\nabla^2 E(y^A_{\rm M})$ are 2 and 8, and $\sA = 2$ is simple and the lowest eigenvalue. 
In the second case $\alpha=\frac{1}{4}$, $\nabla^2 E_1(y^A_{\rm M})$ has an eigenvalue 2 with multiplicity 2, so $\sA = 2$ is the lowest but a degenerated eigenvalue of $\nabla^2E_1(y^A_{\rm M})$.
In the last case $\alpha=\frac{1}{8}$, the eigenvalues of $\nabla^2 E_2(y^A_{\rm M})$ are 2 and 1, in which $\sA = 2$ is simple but not the lowest eigenvalue. 
We see that the condition {\bf (B)} is satisfied by the first case, but not for the others.

The contour lines of the energy landscapes are shown in Figure \ref{fig:ex1:contour}, \ref{fig:ex1:contour-3} and \ref{fig:ex1:contour-2}, respectively.
We see that in all cases there are two minimizers located at $(-1,0)$ and $(1,0)$.
The exact MEPs are all explicitly known as the upper branch of the unit circle, from which we can calculate the numerical errors.
We present the decay of the numerical errors for the three cases in Figure \ref{fig:ex1:convergence}, \ref{fig:ex1:convergence-3} and \ref{fig:ex1:convergence-2}, respectively.

For the first case, we observe that the convergence rate of the discrete $C^1$-norm error is $\mathcal{O}(h)$ and that of the energy barrier error is $\mathcal{O}(h^2)$, which agrees with our theory. 
We further observe that the discrete $C$-norm error decays with the same rate as the discrete $C^1$-norm error, i.e., there is no improved rate when employing a weaker norm. This suggests that the spaces we employed in the analysis are in fact natural.

In the last two cases, we see that the discrete MEP also converges in discrete $C^1$-norm, but the convergence rates are significantly slower. 
This indicates that the condition {\bf (B)} is indeed necessary for the convergence result of Theorem \ref{thm2:conv}.
However, maybe surprisingly, we find that the discrete $C$-norm error and the energy barrier error still decay as $O(h)$ and $O(h^2)$, respectively. 
The methods of our present work are insufficient to explain these effects; obtaining sharp rates in the linearly unstable regime will require further work.

\begin{figure}[htbp!]
\centering
\begin{minipage}[t]{0.48\textwidth}
\centering
\includegraphics[width=7.5cm]{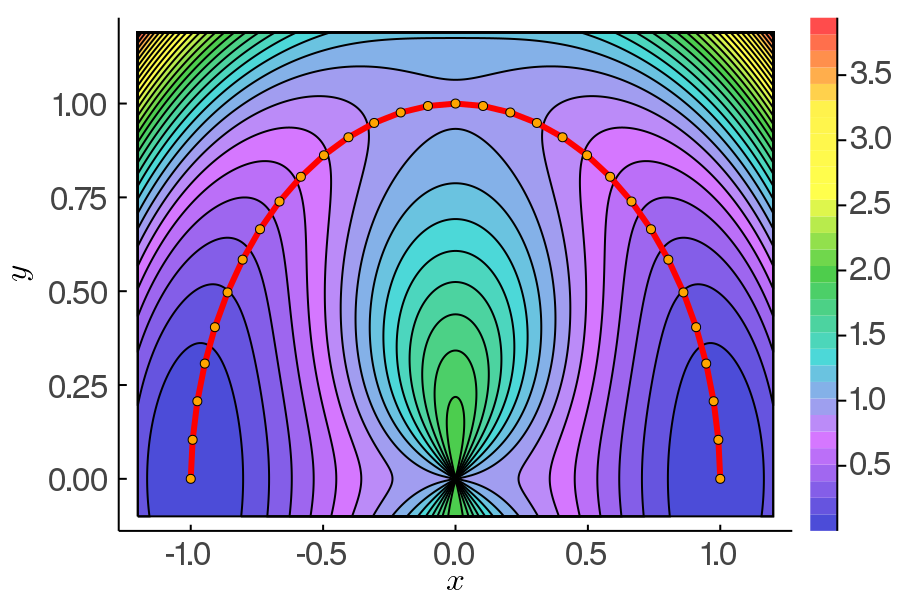}
\caption{(Example 1) The contour lines of the energy landscape  for $E$, with two minimizers and the MEP (red line).}
\label{fig:ex1:contour}
\end{minipage}
\hskip 0.5cm
\begin{minipage}[t]{0.48\textwidth}
\centering
\includegraphics[width=8.0cm]{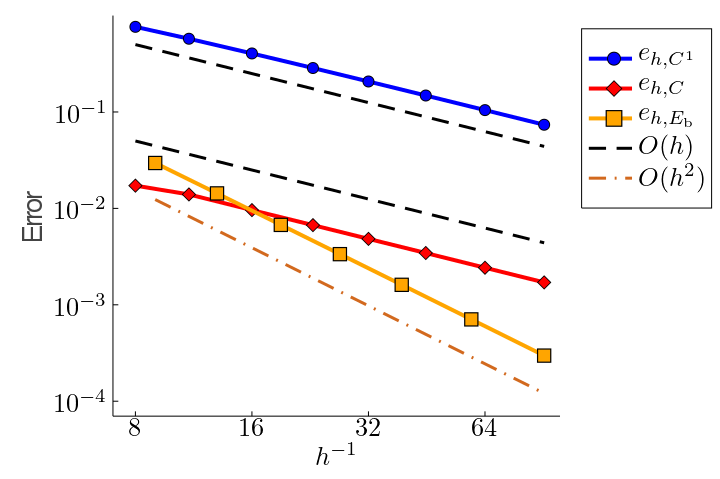}
\caption{(Example 1) The convergence of the discrete MEP ($\sigma_A$ is simple and lowest eigenvalue).
}
\label{fig:ex1:convergence}
\end{minipage}
\vskip 0.6cm
\begin{minipage}[t]{0.48\textwidth}
\centering
\includegraphics[width=7.5cm]{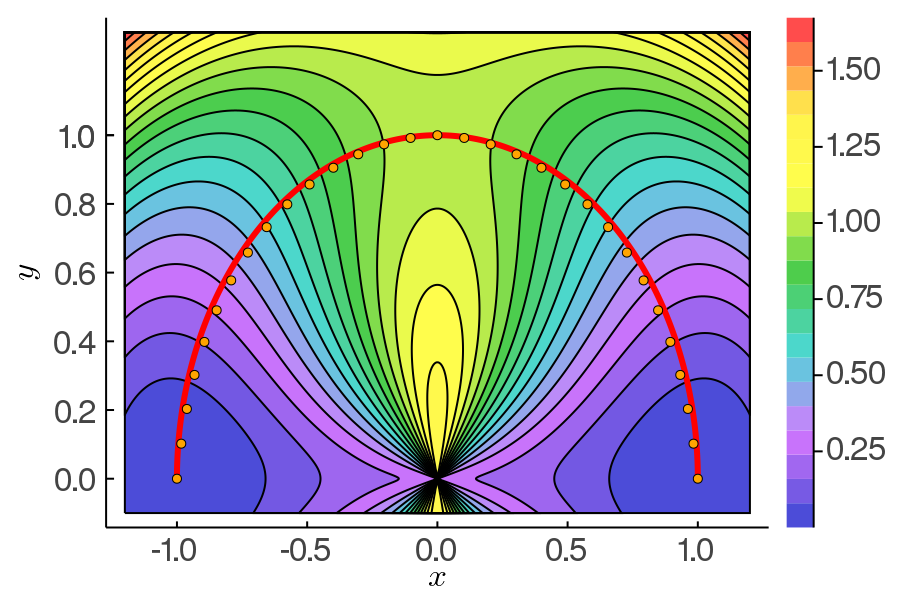}
\caption{(Example 1) The contour lines of the energy landscape for $E_1$, with two minimizers and the MEP (red line).}
\label{fig:ex1:contour-3}
\end{minipage}
\hskip 0.5cm
\begin{minipage}[t]{0.48\textwidth}
\centering
\includegraphics[width=7.2cm]{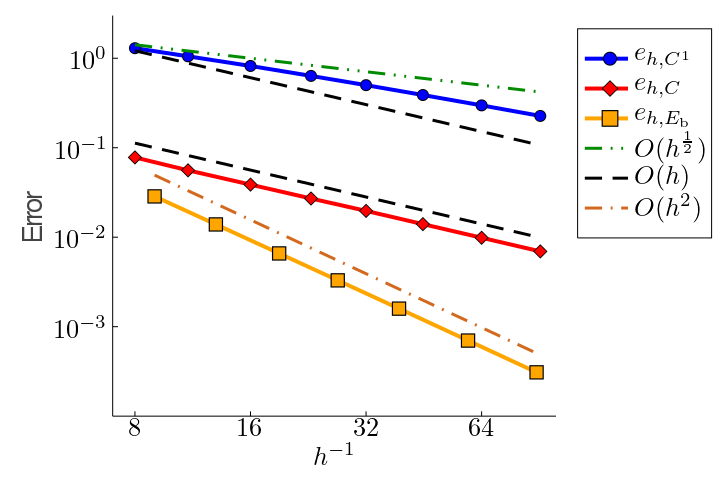}
\caption{(Example 1) The convergence of the discrete MEP ($\sigma_A$ is a degenerated eigenvalue).}
\label{fig:ex1:convergence-3}
\end{minipage}
\vskip 0.6cm
\begin{minipage}[t]{0.48\textwidth}
\centering
\includegraphics[width=7.5cm]{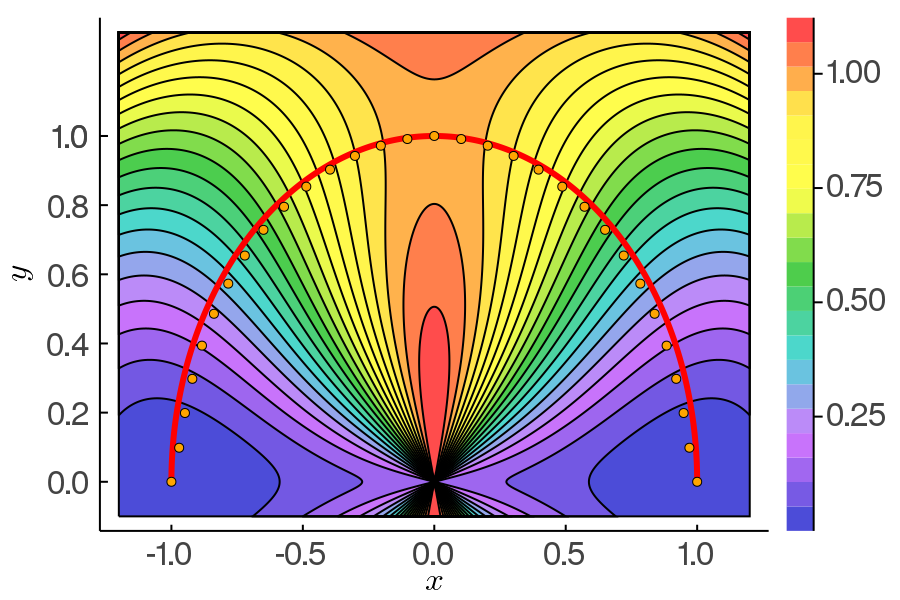}
\caption{(Example 1) The contour lines of the energy landscape for $E_2$, with two minimizers and the MEP (red line).}
\label{fig:ex1:contour-2}
\end{minipage}
\hskip 0.5cm
\begin{minipage}[t]{0.48\textwidth}
\centering
\includegraphics[width=7.2cm]{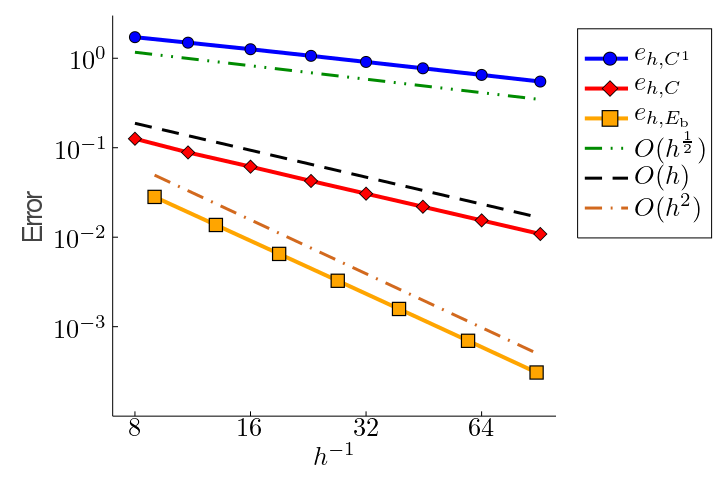}
\caption{(Example 1) The convergence of the discrete MEP ($\sigma_A$ is not the lowest eigenvalue).}
\label{fig:ex1:convergence-2}
\end{minipage}
\end{figure}

\vskip 0.2cm

{\bf Example 2.}
Next, we consider the Muller potential \cite{muller1979MEP} $E:\R\times\R \rightarrow \R$ with
\begin{equation*}
    E(x,y) = \sum_{k=1}^{4} T_k \exp\Big( a_k (x - \bar{x}_k)^2+b_k(x-\bar{x}_k)(y-\bar{y}_k) + c_k(y-\bar{y}_k)^2 \Big) , 
\end{equation*}
where the parameters are $T=(-200,-100,-170,-15)^{\rm T}$, $a=(-1,-1,-6.5,0.7)^{\rm T}$, $b=(0,0,11,0.6)^{\rm T}$, $c=(-10,-10,-6.5,0.7)^{\rm T}$, $\bar{x}=(1,0,-0.5,-1)^{\rm T}$ and $\bar{y}=(0,0.5,1.5,1)^{\rm T}$.
We present the contour lines of energy landscape in Figure \ref{fig:ex2:contour} and the decay of numerical errors in Figure \ref{fig:ex2:convergence}. 
We again observe an $\mathcal{O}(h)$ decay for the discrete $C^1$-norm and $C$-norm errors, and $\mathcal{O}(h^2)$ for the energy barrier error.

\begin{figure}[htbp!]
\centering
\begin{minipage}[t]{0.48\textwidth}
\centering
\includegraphics[width=7.3cm]{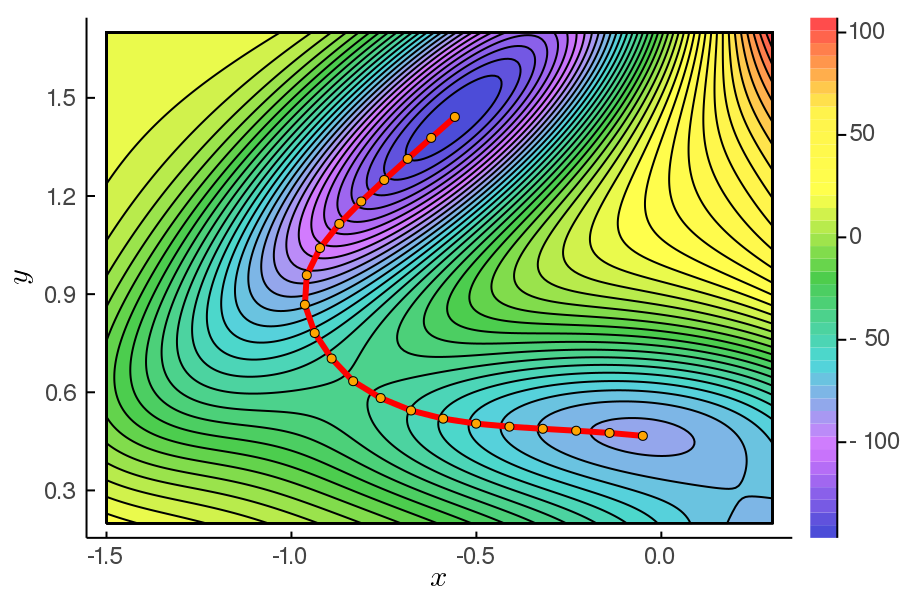}
\caption{(Example 2) The contour lines of the energy landscape, with two minimizers and the MEP (red line).}
\label{fig:ex2:contour}
\end{minipage}
\hskip 0.5cm
\begin{minipage}[t]{0.48\textwidth}
\centering
\includegraphics[width=8.0cm]{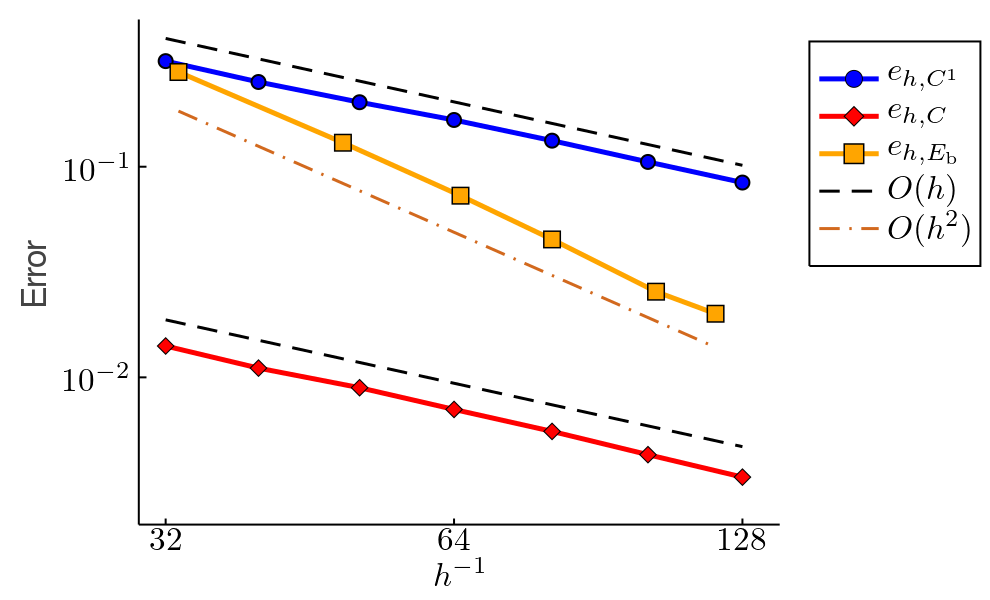}
\caption{(Example 2) Convergence of the discrete MEP.
}
\label{fig:ex2:convergence}
\end{minipage}
\end{figure}
\begin{figure}[htb!]
\centering
\begin{minipage}[t]{0.48\textwidth}
\centering
\includegraphics[width=7.5cm]{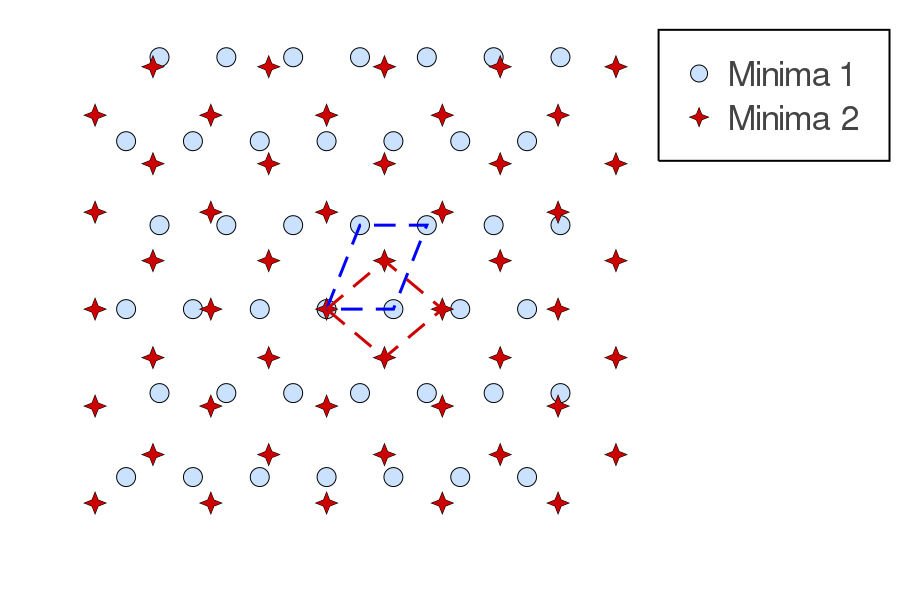}
\caption{(Example 3) The configurations of two minimizers.}
\label{fig:ex3:cell}
\end{minipage}
\hskip 0.5cm
\begin{minipage}[t]{0.48\textwidth}
\centering
\includegraphics[width=7.5cm]{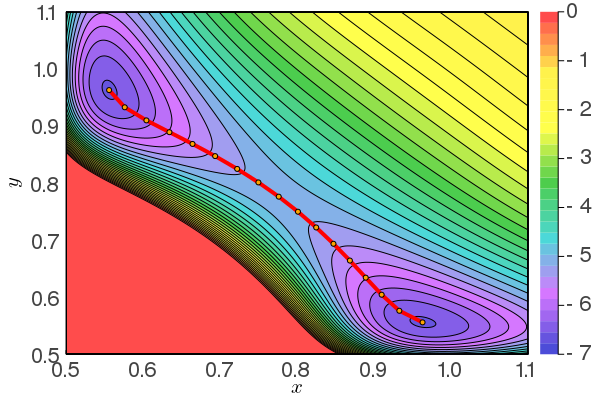}
\caption{(Example 3) The contour lines of energy landscape, with two minimizers and the MEP (red line).
Since the energy diverges when the two atoms collide, we only show contour lines with energy less than $0$.
}
\label{fig:ex3:contour}
\end{minipage}
\vskip 0.6cm
\begin{minipage}[t]{0.48\textwidth}
\centering
\includegraphics[width=7.0cm]{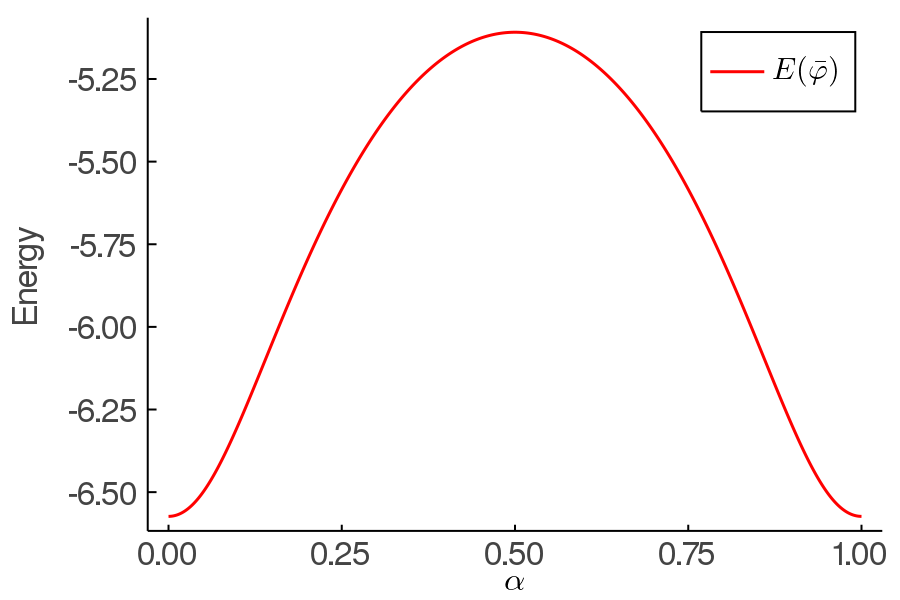}
\caption{(Example 3) Energy path of MEP}
\label{fig:ex3:Emep}
\end{minipage}
\hskip 0.5cm
\begin{minipage}[t]{0.48\textwidth}
\centering
\includegraphics[width=8.0cm]{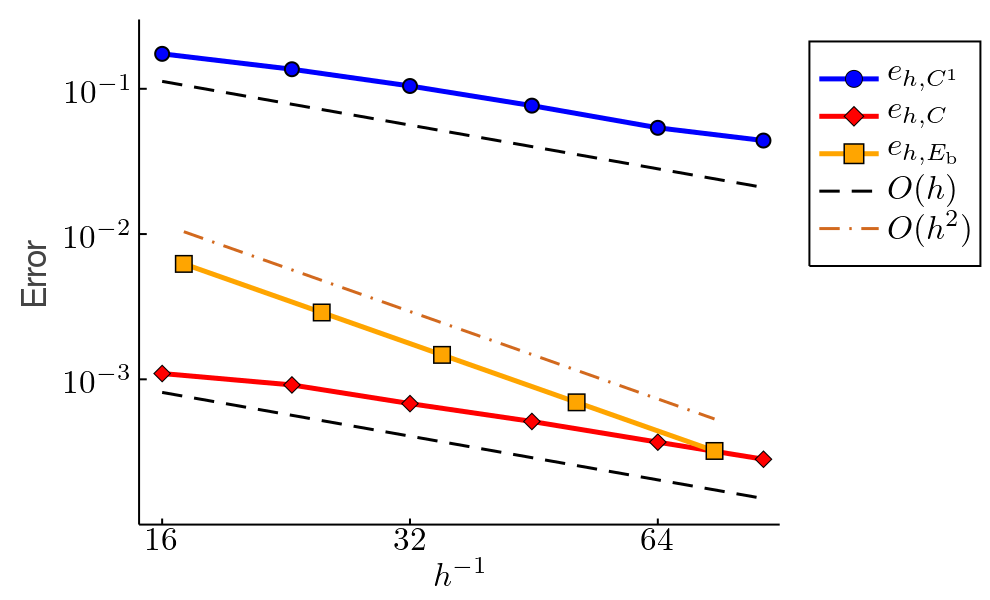}
\caption{(Example 3) Convergence of the discrete MEP.
}
\label{fig:ex3:convergence}
\end{minipage}
\end{figure}

\vskip 0.2cm

{\bf Example 3.}
Consider a two dimensional periodic system with two atoms lying in each unit cell. 
In the ``first" cell containing the origin, we fix one atom at the origin and put the other at the position $(x,y)\in \R^2$, and then construct a corresponding lattice $\Lambda := 
\left(\begin{smallmatrix}
2x & x \\
0 & y
\end{smallmatrix}\right)
\Z^2$ (see Figure \ref{fig:ex3:cell}). 
The interaction between any two atoms in the lattice $\Lambda$ is modelled by the Lennard-Jones pair potential
\begin{equation*}
    \phi(r) = 4\epsilon_0 \left( \left(\frac{\sigma_0}{r}\right)^{12} - \left(\frac{\sigma_0}{r}\right)^{6}\right)\phi_{\rm cut}(r)
\end{equation*}
with parameters $\epsilon_0=1,~\sigma_0 = 1$ and a cubic spline cut-off function \cite{Braun2020SharpUC} $\phi_{\rm cut}$ satisfying $\phi_{\rm cut}(r) = 1$ for $r\in[0,1.9]$ and $\phi_{\rm cut}(r) = 0$ for $r\geq2.7$.
Then the ``averaged" energy of the system (i.e. energy per cell) is the sum of the site energy of the two atoms located at origin and $(x,y)$,
\begin{equation*}
E(x,y) = \frac{1}{2}\sum_{(x',y')\in\Lambda}\phi\big(|(x',y')-(0,0)|\big) + \frac{1}{2}\sum_{(x',y')\in\Lambda}\phi\big(|(x',y')-(x,y)|\big).
\end{equation*}
We present the contour lines of the energy landscape in Figure \ref{fig:ex3:contour}, together with the two minimizers and the MEP.
We further present the energy path of the MEP in Figure \ref{fig:ex3:Emep}.
The decay of the numerical errors are shown in Figure \ref{fig:ex3:convergence}, from which we again observe an $\mathcal{O}(h)$ convergence rate for the discrete $C^1$-norm and discrete $C$-norm errors, and an $\mathcal{O}(h^2)$ convergence rate for the energy barrier error.

\section{Conclusions}
\label{sec:conclusion}
\setcounter{equation}{0}

This paper studies the convergence of the discrete MEPs that are given by the stationary states of the NEB methods. 
We establish an optimal convergence rate of the discrete MEP with respect to the number of images along the path. Our numerical tests demonstrate that the technical conditions we employed are indeed required. 

For numerical methods that find the MEP by evolving the path between local minimizers, say the NEB methods and string methods, an important remaining issue is the convergence with respect to the evolution time, which will be studied in our other work.

\section{Proofs: Convergence of the discrete MEP}
\label{sec:dmep}
\setcounter{equation}{0}
\setcounter{remark}{0}

In this section, we will provide a proof for the main result Theorem \ref{thm2:conv}. 
We will first propose reformulation of the MEP equation \eqref{mep}, for which we have proven linearized stability in \cite{liu2022stability}. 
We then formulate a corresponding discretization that inherits the stability of the reformulated continuous MEP equation. We will then use the fact that solutions to our original discrete MEP formulation~\eqref{dMEP:NEB} and the reformulated discrete MEP formulation coincide to establish the convergence of the discrete MEP. 
We emphasize that the linearized stability result we will prove is valid only for the reformulated discrete MEP operator and not the original discrete MEP operator~\eqref{dMEP:NEB}.

\subsection{The abstract MEP and its discretization}
\label{sec:defDMEP}

For convenience of the analysis, we shall reformulate the MEP equation \eqref{mep} as follows: 
Define an MEP operator $\mathscr{F} : \U \rightarrow \Cz\big([0,1];\R^N\big)$ with 
\begin{align}
\label{def:F}
\hskip -0.3cm \mathscr{F}(\varphi) (\alpha) :=
P^\perp_{\varphi'(\alpha)} \nabla E\big(\varphi(\alpha)\big) - \alpha(\alpha-1)(\alpha-\sad) \big(|\varphi'(\alpha)| - L(\varphi)\big) \frac{\varphi'(\alpha)}{\lvert \varphi'(\alpha) \rvert }
\end{align}
for $\alpha\in[0,1]$. 
The first term in \eqref{def:F} is perpendicular to the tangent direction $\varphi'$, which is exactly the same as the left-hand side of \eqref{mep1}; the second term in \eqref{def:F} is parallel to
the tangent $\varphi'$, which is designed to enforce the curve to be parameterized by normalized
arc length.
Then \eqref{mep} can rewritten in the following form: Find $\mv\in\U$ such that
\begin{equation}
\label{pb:mep}
\mathscr F(\mv) = 0.
\end{equation}

To give the first variation of $\mathscr{F}$ at $\mv$, we introduce a function measuring the gradient of MEP, which will be heavily used in our analysis. Let
\begin{equation}
\label{eq:ml}
    \ml(\alpha) := \nabla E\big(\mv(\alpha)\big)^{\rm T} \frac{\mv'(\alpha)}{\lvert \mv'(\alpha) \rvert^2},   \qquad \alpha\in[0,1] .
\end{equation}
We see from \eqref{mep1} that
\begin{equation}
\label{eq:ml1}
\nabla E\big(\mv(\alpha)\big) = \ml(\alpha) \mv'(\alpha)\qquad{\rm for}~\alpha\in[0,1],
\end{equation}
The following lemma \cite[Lemma 4.1]{MEPStab} states the properties of $\ml$, from which we can observe that $\sA=\ml'(0)$ and $\sB=\ml'(1)$.

\begin{lemma}
\label{lemma:lambda}
Let $\mv\in C^2\big( [0,1];\R^{N} \big)$ be the solution of \eqref{mep}.
If {\bf (A)} is satisfied, then
\begin{itemize}
\item[(i)]
$\big(\ml'(0), \mv'(0)\big)$, $\big(\ml'(\sad), \mv'(\sad)\big)$ and $\big(\ml'(1), \mv'(1)\big)$ are eigenpairs of the Hessians $\nabla^2 E(y^A_{\rm M})$, $\nabla^2 E(y_{\rm S})$ and $\nabla^2 E(y^B_{\rm M})$, respectively;
\item[(ii)]
$\ml'(0)>0,~\ml'(1)>0$ and $\ml'(\sad)<0$; and 
\item[(iii)]
there exist positive constants $\underline{c}, \bar{c}$ depending only on $\mv$, such that
\begin{equation}
\label{eq:mlequ}
    \underline{c} \leq \left\lvert \frac{\ml(\alpha)}{\alpha(\alpha-1)(\alpha-\sad)} \right\rvert \leq \bar{c} \qquad \forall~\alpha \in (0,\sad)\cup(\sad,1) .
\end{equation}
\end{itemize} 
\end{lemma}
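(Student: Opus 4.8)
The plan is to recast the theorem as a statement about the abstract discrete equation \eqref{pb:dmep} and to prove existence of a solution near the nodal interpolant $I_h\mv := \{\mv(\alpha_k)\}_{k=0}^M$ by a quantitative inverse function theorem, combining a consistency estimate with a uniform discrete stability estimate. First I would settle the link between the abstract discrete problem and the NEB stationary equation \eqref{dMEP:NEB}: for a discrete path sufficiently close to $I_h\mv$ the energy is strictly increasing for $k<\ds$ and strictly decreasing for $k>\ds$, so the upwind tangent $\hat{\tau}_k$ of \eqref{eq:upwind} coincides with $(\tilde{D}_h\dv_h)_k/|(\tilde{D}_h\dv_h)_k|$; moreover the prefactor $\alpha_k(\alpha_k-1)(\alpha_k-\alpha_{\ds+\frac12})$ is nonzero at every interior node, so the tangential component of $\mathscr{F}_h(\dv_h)=0$ is equivalent to the equidistribution \eqref{eq:eqdis}, while its normal component reproduces $P^\perp_{\hat{\tau}_k}\nabla E(\dv_{h,k})=0$. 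Since the two components are orthogonal, $\mathscr{F}_h(\dv_h)=0$ is then equivalent to $\F(\dv_h)=0$. This reduces the theorem to producing a solution of \eqref{pb:dmep} with $\dv_h$ within $\Cz h$ of $I_h\mv$ in the discrete $\Co$-norm $\|\cdot\|_{X_h}$ appearing in \eqref{est2:dismep}.

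For the consistency estimate I would bound $\|\mathscr{F}_h(I_h\mv)\|_{Y_h}\le \Cz h$. The normal part uses \eqref{eq:ml1}: since $\nabla E(\mv(\alpha_k))=\ml(\alpha_k)\mv'(\alpha_k)$ and $P^\perp_{\mv'(\alpha_k)}\mv'(\alpha_k)=0$, the residual equals $\big(P^\perp_{(\tilde{D}_h\mv)_k}-P^\perp_{\mv'(\alpha_k)}\big)\ml(\alpha_k)\mv'(\alpha_k)$; the projector difference is $\mathcal{O}(h)$ because $\mv\in C^3$, and $|\ml(\alpha_k)|\le\bar{c}\,|\alpha_k(\alpha_k-1)(\alpha_k-\sad)|$ by Lemma \ref{lemma:lambda}(iii). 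Dividing by $\alpha_k(\alpha_k-1)$ in the first term of the $Y_h$-norm \eqref{def:Clh} leaves a factor $\mathcal{O}(\alpha_k-\sad)=\mathcal{O}(1)$, so this contribution is $\mathcal{O}(h)$; the second (divided-difference) term of \eqref{def:Clh} is handled similarly, using that the central stencil in \eqref{def:Dh} is used only at $k=\ds$. The tangential part is $\mathcal{O}(h)$ because $\mv$ is arc-length parameterized, so $|(\hat{D}_h\mv)_k|-L_h(\mv)=\mathcal{O}(h)$, and the bounded prefactor combined with the weight again yields $\mathcal{O}(h)$.

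The crux is the uniform discrete stability of the linearization $\mathscr{L}_h:=\delta\mathscr{F}_h(I_h\mv)$, i.e. that $\mathscr{L}_h$ is invertible with $\|\mathscr{L}_h^{-1}\|_{\LL(Y_h,X_h)}\le \Cz$ independent of $h$. Here I would exploit the orthogonal splitting of each variation into a part tangent to $\mv'$ and a normal part. The tangential block is governed by the linearized equidistribution constraint, which is up to lower-order terms a symmetric second-difference operator and is uniformly invertible by a summation-by-parts / discrete maximum-principle argument. The normal block is a consistent discretization of the continuous linearized projected-gradient operator, whose principal part is $P^\perp\nabla^2E\,P^\perp$ corrected by $\ml$-dependent curvature terms, and whose invertibility on the weighted spaces is exactly the content of the continuous stability result of \cite{MEPStab}; the sign conditions $\ml'(0),\ml'(1)>0$ and $\ml'(\sad)<0$ from Lemma \ref{lemma:lambda}(ii), together with assumption {\bf (B)} (the exit eigenvalues $\sA,\sB$ are simple and lowest), guarantee there are no spurious kernel directions at the endpoints or at the saddle. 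I expect this normal estimate to be the main obstacle: one must show that the continuous coercivity/inf-sup survives discretization \emph{uniformly} in $h$, despite the weights in \eqref{def:Clh} being singular at $\alpha=0,1,\sad$. My plan is to transfer the continuous stability by a perturbation argument, writing $\mathscr{L}_h$ as a consistent discretization of the continuous linearized operator plus a remainder that is $\mathcal{O}(h)$ when measured between the weighted norms $X_h$ and $Y_h$, and then absorbing the remainder for $h$ small; the weights are chosen precisely so that the $\ml$-degeneracy cancels and the discrete operator stays boundedly invertible.

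With consistency and stability in hand, the Newton--Kantorovich theorem applied to $\mathscr{F}_h$ at $I_h\mv$ --- using in addition a uniform Lipschitz bound on $\delta\mathscr{F}_h$ on a neighborhood, which follows from $E\in C^4$ --- yields a solution $\bar{\phi}_h$ with $\|\bar{\phi}_h-I_h\mv\|_{X_h}\le \Cz\|\mathscr{F}_h(I_h\mv)\|_{Y_h}\le C_{\rm p}h$, which is \eqref{est2:dismep}. Finally, for the barrier estimate \eqref{est:Eb} I would use that every image is within $\mathcal{O}(h)$ of the curve and that $\alpha_{\ds}=\sad+\mathcal{O}(h)$, so the image attaining $\max_k E(\bar{\phi}_{h,k})$ lies within $\mathcal{O}(h)$ of the saddle $y_S$; since $\nabla E(y_S)=0$, a second-order Taylor expansion of $E$ about $y_S$ annihilates the first-order term and gives $|E(y_S)-\max_k E(\bar{\phi}_{h,k})|=\mathcal{O}(h^2)$, hence $|\delta E(\mv)-\delta E(\bar{\phi}_h)|\le C_{\rm e}h^2$.
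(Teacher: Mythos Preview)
Your proposal does not address the stated lemma at all: Lemma~\ref{lemma:lambda} concerns the scalar function $\ml(\alpha)=\nabla E(\mv(\alpha))^T\mv'(\alpha)/|\mv'(\alpha)|^2$ and asserts (i) that $(\ml'(\alpha_*),\mv'(\alpha_*))$ are eigenpairs of $\nabla^2E$ at the three critical points $\alpha_*\in\{0,\sad,1\}$, (ii) sign conditions on $\ml'$ there, and (iii) the two-sided bound \eqref{eq:mlequ}. What you wrote is instead an outline of the proof of Theorem~\ref{thm2:conv} (consistency + discrete stability + inverse function theorem + Taylor expansion for the barrier). In the paper Lemma~\ref{lemma:lambda} is quoted from \cite{MEPStab} and carries no proof here; it is \emph{used} as an ingredient in the later arguments, which is exactly how you invoke it (``by Lemma~\ref{lemma:lambda}(iii)'', ``the sign conditions from Lemma~\ref{lemma:lambda}(ii)''), rather than something you establish.

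A correct proof of the lemma is short and elementary. Differentiate the identity \eqref{eq:ml1}, $\nabla E(\mv(\alpha))=\ml(\alpha)\mv'(\alpha)$, to get $\nabla^2E(\mv(\alpha))\mv'(\alpha)=\ml'(\alpha)\mv'(\alpha)+\ml(\alpha)\mv''(\alpha)$; since $\ml(0)=\ml(\sad)=\ml(1)=0$ (the endpoints and saddle are critical points), evaluating at $\alpha_*\in\{0,\sad,1\}$ gives (i). Part (ii) follows because $\ml'(\alpha_*)$ is the eigenvalue of $\nabla^2E$ in the tangent direction: at the strong minima this is positive, and at the index-1 saddle the tangent direction is the unstable one (the energy has a strict maximum along $\mv$ at $\sad$), so $\ml'(\sad)<0$. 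For (iii), assumption {\bf (A)} forces $\ml$ to vanish only at $0,\sad,1$, and by (ii) each zero is simple; hence $\alpha\mapsto \ml(\alpha)/\big(\alpha(\alpha-1)(\alpha-\sad)\big)$ extends to a continuous, nowhere-vanishing function on $[0,1]$, and compactness gives the constants $\underline{c},\bar{c}$. None of the discretization machinery, the weighted space $Y_h$, or the inverse function theorem is relevant here.
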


Now we can derive the first variation of $\mathscr{F}$.
Let 
\begin{align*}
\X := \Big\{\tv\in\Co\big([0,1];\R^N\big):~\tv(0) = \tv(1) = 0 \Big\} 
\qquad {\rm and} \qquad
Y := C\big([0,1];\R^N\big) .
\end{align*}
By using \eqref{eq:ml1}, we obtain by a direct calculation that the first variation $\delta\mathscr{F}(\mv) : \X \rightarrow Y$ is
\begin{align}
\label{def:dF}
\delta\mathscr{F}(\mv)\tv (\alpha) &= 
P^\perp_{\mv'(\alpha)} \Big( \nabla^2 E\big(\mv(\alpha)\big)\tv(\alpha) - \ml(\alpha) \tv'(\alpha) \Big)
\nonumber \\[1ex]
&\hskip -1.8cm +  \alpha(\alpha-1)(\alpha-\sad)\left( \frac{\mv'(s)^{\rm T}\tv'(s)}{|\mv'(s)|} - \int_0^1\frac{\mv'(s)^{\rm T}\tv'(s)}{|\mv'(s)|}\dd s \right) \frac{\mv'(\alpha)}{\lvert \mv'(\alpha) \rvert}\qquad{\rm for}~\alpha\in[0,1].
\end{align}

\vskip 0.2cm

Next, we provide a discretization of the operator $\mathscr{F}$, with $M+1$ ($M\in\Z_+$) images.
Let $h=\frac{1}{M}$ be the corresponding mesh size.
We denote by $\alpha_k := k h ~(k=0,\cdots,M)$ the nodes of the discrete path.
Since there is a saddle point $y_{\rm S}$ along the MEP $\mv$ that reaches the maximum of $E$, there exists a $\ds\in\Z_+$ such that $\alpha_{\ds} \in (\sad-h,\sad+h)$ approximates the saddle $\sad$ and
\begin{align}
\label{sbar}
E\big(\mv(\alpha_{\ds})\big) = \max_{0\leq k\leq M} E\big(\mv(\alpha_k)\big) .
\end{align}
To approximate the tangent of a discrete path, we next introduce a first-order difference operator $\tilde{D}_h :(\R^N)^{M+1}\rightarrow (\R^N)^{M+1}$ with
\begin{align}
\label{def:Dh} 
(\tilde{D}_h\dv_h)_k  &:=
\left\{\begin{array}{ll}
\displaystyle 
\frac{\varphi_{h,k+1} - \varphi_{h,k}}{h}   &{\rm for}~k=0,\cdots,\ds-1  
\\[1ex]
\displaystyle 
\frac{\varphi_{h,k+1} - \varphi_{h,k-1}}{2h} 
& {\rm for}~ k=\ds
\\[1ex]
\displaystyle 
\frac{\varphi_{h,k} - \varphi_{h,k-1}}{h} 
& {\rm for}~ k=\ds+1,\cdots,M
\end{array} \right. .
\end{align}
We see from \eqref{sbar} that $\tilde{D}_h$ is equivalent to $D_h$ defined in \eqref{eq:upwind} when $\varphi_h$ is the uniform sampling nodes of $\mv$.
We shall also need the following difference operator $D^{-}_h :(\R^N)^{M+1}\rightarrow (\R^N)^{M+1}$ for arc length normalization, with 
\begin{align}
\label{def:DhL}
(D^{-}_h \varphi_h)_k &:=
\left\{\begin{array}{ll}
\displaystyle 
\frac{\varphi_{h,k+1} - \varphi_{h,k}}{h}
&{\rm for}~k=0
\\[1ex]
\displaystyle 
\frac{\varphi_{h,k} - \varphi_{h,k-1}}{h} 
& {\rm for}~ k=1,\cdots,M
\end{array} \right. .
\end{align}

Now we are able to provide a discretization {$\mathscr{F}_h$ of $\mathscr{F}$.} 
Let 
\begin{align*}
\X_{h} := \Big\{ \tv_h = \{\tv_{h,k}\}_{k=0}^{M}\in (\R^N)^{M+1}:~\tv_{h,0}=\tv_{h,M}=0 \Big\}
\end{align*}
equipped with the norm 
$$
\| \tv_h \|_{\X_h} :=|D^{-}_h \tv_h|_{\infty} + |\tv_h|_{\infty} ,
$$ 
and
\begin{equation*}
Y_h := \Big\{f_h = \{f_{h,k}\}_{k=0}^{M} \in (\R^N)^{M+1}:~f_{h,0}=f_{h,M}=0 \Big\}
\end{equation*}
equipped with the norm 
\begin{equation}
\label{def:Clh}
\| f_h \|_{\Y_h} := \max_{1\leq k\leq M-1}\left|\frac{f_{h,k}}{\alpha_k(\alpha_k-1)}\right|_{\infty} + \max_{ \substack{0\leq k\leq M, \\[0.05cm] k\neq\ds}}\left|\frac{f_{h,k}-f_{h,\ds}}{\alpha_k-\alpha_{\ds}}\right|_{\infty}.
\end{equation} 
We define $\mathscr{F}_h : \U_h \rightarrow Y_h$ by 
\begin{multline}
\label{def:Fh}
\big( \mathscr{F}_h(\varphi_h) \big)_k 
:= P_{(\tilde{D}_h\varphi_h)_k}^\perp \nabla E(\varphi_{h,k})
\\[1ex]
- \alpha_k(\alpha_k-1)(\alpha_k - \alpha_{\ds+\frac12}) \Big(|(D^{-}_h \varphi_h)_k| - L_h(\varphi_h)\Big) \frac{(\tilde{D}_h\varphi_h)_k}{|(\tilde{D}_h\varphi_h)_k|}
\qquad {\rm for}~k=0,\cdots,M ,
\quad
\end{multline}
where $\alpha_{\ds+\frac12} := (\ds+\frac{1}{2})h$ and $L_h:\U_h\rightarrow \R$ is a discretization of the length operator $L$
\begin{align}
\label{def:Lh}
L_h(\varphi_h) := h \sum_{k=1}^{M} |(D^{-}_h \varphi_h)_k| = \sum_{k=1}^{M} |\varphi_{h,k}-\varphi_{h,k-1}|.
\end{align}
We can then provide a discretization of the equation \eqref{pb:mep}: 
Find $\mv_h\in \U_h$ such that
\begin{equation}
\label{pb:dmep}
\mathscr{F}_h (\mv_h) = 0.
\end{equation}
We observe from \eqref{def:DhL}, \eqref{def:Lh} and \eqref{pb:dmep} that the images on the solution $\mv_h$ are uniformly distributed:
\begin{eqnarray}
\label{eq:eqdis}
\lvert \mv_{h,k} - \mv_{h,k-1} \rvert = \lvert \mv_{h,k} - \mv_{h,k+1}\rvert
\qquad\forall~ k=1,\cdots,M-1.
\end{eqnarray} 
The solution $\mv_h$ of \eqref{pb:dmep} can be viewed as a discrete MEP within the abstract framework in this section.
To bridge the gap between the discrete MEPs from \eqref{pb:dmep} and \eqref{dMEP:NEB}, we will show in the last part of this proof (in Section \ref{sec:conv}) that the solution of the abstract form \eqref{pb:dmep} is equivalent to that of \eqref{dMEP:NEB}.

For $\varphi_h \in \U_h$, we can derive the first variation $\delta\mathscr{F}_h(\varphi_h) : \X_h \rightarrow \Y_h$ with
\begin{align}
\label{def:dFh}
& \big( \delta\mathscr{F}_h(\varphi_h) \tdmv \big)_k 
\nonumber 
\\[1ex]
=~ & P_{(\tilde{D}_h\varphi_h)_k}^\perp \left( \nabla^2 E(\varphi_{h,k})\tv_{h,k} - \frac{ \nabla E(\varphi_{h,k})^{\rm T} (\tilde{D}_h\varphi_h)_k}{|(\tilde{D}_h\varphi_h)_k|^2} (\tilde{D}_h\tv_h)_k\right) 
\nonumber 
\\[1ex]
& - \frac{ \big( P_{(\tilde{D}_h\varphi_h)_k}^\perp \nabla E(\varphi_{h,k}) \big)^{\rm T} (\tilde{D}_h\tv_h)_k}{|(\tilde{D}_h\varphi_h)_k|} \frac{(\tilde{D}_h\varphi_h)_k}{|(\tilde{D}_h\varphi_h)_k|}
\nonumber 
\\[1ex]
& + \alpha_k(\alpha_k-1)(\alpha_k - \alpha_{\ds+\frac12}) \left( \frac{ (D^{-}_h \tv_h)_k^{\rm T} (D^{-}_h\varphi_h)_k}{|(D^{-}_h\varphi_h)_k|} - h \sum_{t=1}^{M}\frac{ (D^{-}_h\tv_h)_t^{\rm T} (D^{-}_h\varphi_h)_t}{|(D^{-}_h \varphi_h)_t|} \right) \frac{(\tilde{D}_h\varphi_h)_k}{|(\tilde{D}_h\varphi_h)_k|}
\nonumber
\\[1ex]
& + \alpha_k(\alpha_k-1)(\alpha_k - \alpha_{\ds+\frac12})\big(|(D^{-}_h\varphi_h)_k| - L_h(\varphi_h)\big)  \frac{ P_{(\tilde{D}_h\varphi_h)_k}^\perp (\tilde{D}_h\tv_h)_k}{|(\tilde{D}_h\varphi_h)_k|} 
\end{align}
for $k=0,\cdots,M$.

A key ingredient of our convergence analysis is to show the stability of the discrete MEP, which gives the bound of the inverse of $\delta\mathscr{F}_h(\Pi_h\mv)$ and will be presented in the coming subsection. 
Note that we have justified the stability of a continuous MEP in our previous work \cite{MEPStab}, but generalizing the stability result to the discrete path requires non-trivial changes to those arguments.

\subsection{Stability of the discrete MEP}
\label{sec:stab_dmep}

Define a projection operator $\Pi_h : \Cz\big([0,1];\R^N\big) \rightarrow (\R^N)^{M+1}$ by
\begin{equation}
\label{Pih}
( \Pi_h\varphi )_k := \varphi(k h), \qquad k=0,\cdots,M.
\end{equation}
The following theorem states the stability of the discrete MEP. 

\begin{theorem}
\label{le:stab_dmep}
Let $\mv\in C^3\big( [0,1];\R^{N} \big)$ be a solution of \eqref{mep}.
Assume that {\bf (A)} and {\bf (B)} are satisfied. 
Then for sufficiently small $h$, there exists a constant $C>0$ depending only on $E$ and $\mv$ such that
\begin{equation}
\label{lemme:stability}
\| \tdmv \|_{\X_h} \leq C \|\delta\mathscr{F}_h(\Pi_h \mv)\tdmv \|_{\Y_h}  
\qquad\forall~\tdmv \in \X_h.
\end{equation}
\end{theorem}

We shall make some preparations for the proof this theorem.
Let 
\begin{align}
\hat{X}_h &:= \Big\{ \hat{\tv}_h \in \X : ~  \hat{\tv}_h|_{[\alpha_{k-1},\alpha_k]} \text{ is a cubic polynomial for} ~1\leq k \leq M 
\nonumber
\\[1ex]
\label{def:hatXh}
& \hskip 2cm \quad{\rm and}~ \hat{\tv}_h ~{\rm satisfying} ~~ \hat{\tv}'_h(\alpha_k) =  (\tilde{D}_h\Pi_h\hat{\tv}_h)_k \quad {\rm for}~ 0\leq k\leq M \Big\} 
\end{align}
equipped with the norm $\|\cdot\|_{\X}$.
We see that dim$(\hat{X}_h) = N(M-1)$ and any $\hat{\tv}_h\in\hat{X}_h$ can be determined by their values at the nodes $\alpha_k~(1\leq k\leq M-1)$. Thus $\Pi_h:\hat{X}_h\rightarrow \X_h$ is an isomorphism. Denote its inverse operator by $P_h : \X_h \rightarrow \hat{\X}_h$ satisfying
\begin{equation}
\label{def:Ph}
P_h \tv_h (\alpha_k) = \tv_{h,k} \qquad{\rm for}~k=0,\cdots,M.
\end{equation}
For a given $\hat{\tv}_h\in\hat{\X}_h$, let
\begin{equation}
\label{pb:tdmv}
f_h = \Pi_h \delta\mathscr{F}(\mv) \hat{\tv}_h.
\end{equation}
To prove Theorem \ref{le:stab_dmep}, we shall first bring in a basis set of $\hat{X}_h$ to derive a spectral representation of \eqref{pb:tdmv} and obtain some linear systems;
then by analyzing the coefficient matrix we will show that the operator $\Pi_h\delta\mathscr{F}(\mv):\hat{\X}_h\rightarrow \Y_h$ is invertible and $\big\| \big( \Pi_h\delta\mathscr{F}(\mv) \big)^{-1}\big\|_{\LL(\Y_h,\hat{\X}_h)}$ is uniformly bounded with respect to $h$;
and finally, Theorem \ref{le:stab_dmep} follows from a perturbation argument.

For $\alpha\in [0,1]$, we consider the eigenvalue problem 
\begin{equation}
\label{eq:ev}
\Big(P^\perp_{\mv'(\alpha)}\nabla^2 E\big(\mv(\alpha)\big) P^\perp_{\mv'(\alpha)}\Big) \evt_i(\alpha) = \ev_i(\alpha) \evt_i(\alpha)
\qquad {\rm for}~ i = 0,1,\cdots,N-1 
\end{equation}
with $\{z_i(\alpha)\}_{i=0}^{N-1}$ the eigenvalues and $\big\{\evt_i(\alpha)\big\}_{i=0}^{N-1}$ the corresponding eigenfunctions.
From \cite{MEPStab}, we can order the functions $z_i(\alpha)$ such that $\ev_0(\sad) \leq \ev_1(\sad) \leq \cdots \leq \ev_{N -1}(\sad)$ and $z_i\in C^2([0,1];\R),~\evt_i\in C^2\left([0,1];\R^{N}\right)$ for $i=0,1,\cdots, N-1$.
Moreover, we have $z_0(\alpha)\equiv 0$, $\evt_0(\alpha) \equiv \mv'(\alpha)/\lvert\mv'(\alpha)\rvert$ and 
\begin{align}\label{eq:ev-1}
 \ev_j(\alpha) > 0 \qquad{\rm for}~ j=1,\cdots,N-1,~\alpha=0,\sad,1 .
\end{align}
Let $g_{h,k,j}\in \hat{X}_h~ (1\leq k\leq M-1,~0\leq j\leq N-1)$ be a set of functions satisfying
\begin{align}
\label{eq:bas}
g_{h,k,j}(\alpha_t) = \delta_{kt}~\evt_j(\alpha_t)
\qquad {\rm for}~t=1,\cdots,M-1.
\end{align}
Note that $\big\{g_{h,k,j}\big\}$ gives a complete basis set for the finite dimensional space $\hat{X}_h$, since any function $\hat{\tv}_h\in\hat{X}_h$ can be determined by their values at the nodes $\alpha_k~(1\leq k\leq M-1)$.

Now we can represent $\hat{\tv}_h$ in \eqref{pb:tdmv} by the basis $\{ g_{h,k,j} \}$ ($1\leq k\leq M-1,~0\leq j\leq N-1$) 
\begin{equation}
\label{eq:tdmv}
\hat{\tv}_h(\alpha) = \sum_{k=1}^{M-1}\sum_{j=0}^{N-1}\drp_{k,j} g_{h,k,j}(\alpha)
\end{equation}
with $\{\drp_{k,j}\}$ the unknown coefficients. 
As the operator $\delta\mathscr{F}(\mv)$ \eqref{def:dF} has completely different behavior in the direction $\mv'$ and in the subspace $\mv'^\perp$, we will project \eqref{pb:tdmv} into these two subspaces respectively.
Define $\drp_{M,j}= \drp_{0,j} = 0$ for $j=0,\cdots,N-1$.
Denote $\drp^\perp \in (\R^{N-1})^{M+1} $ and $ \drp_0 \in \R^{M+1}$ with
\begin{equation}
\label{def:drpperp}
\drp^\perp_k  := \{ \drp_{k,j}\}_{j=1}^{N-1},\qquad (\drp_0)_k := \drp_{k,0} \qquad {\rm for}~k=0,\cdots,M,
\end{equation}
Denote $F^\perp\in(\R^{N-1})^{M-1}$ and $F_{0}\in\R^{M-1}$ with
\begin{equation}
\label{def:Fperp}
F^\perp_k := \left\{ \evt_j(\alpha_k)^{\rm T} f_{h,k} \right\}_{j=1}^{N-1},\qquad (F_{0})_k:=\evt_0(\alpha_k)^{\rm T} f_{h,k} \qquad{\rm for}~k=1,\cdots,M-1.
\end{equation}
We first consider the problem \eqref{pb:tdmv} in the subspace perpendicular to the tangent of MEP $\xi_0=\mv'/\lvert\mv'\rvert$.
By substituting \eqref{eq:tdmv} into \eqref{pb:tdmv}, taking $k$-th component, multiplying both sides with $\evt_j(\alpha_k)^{\rm T},~1\leq j\leq N-1$, using \eqref{eq:ml1},~\eqref{eq:ev} and \eqref{eq:bas}, we obtain by a direct calculation that
\begin{align}
\label{eq:drp_perp}
A(\alpha_k) \drp^\perp_{k} - \ml(\alpha_k) (\tilde{D}_h\drp^\perp)_k &= F^\perp_k \qquad{\rm for}~k=1,\cdots,M-1,
\end{align}
where 
\begin{equation}
\label{def:A1}
A(\alpha) := {\rm Diag}(\ev_1(\alpha),\ev_2(\alpha), \cdots,\ev_{N-1}(\alpha)) = 
\begin{pmatrix}
z_1(\alpha) & & &  \\[1ex]
 &  z_2(\alpha) &  &\\[1ex]
 & & \ddots &\\[1ex]
 & &  & z_{N-1}(\alpha)
\end{pmatrix}
\end{equation}
for $\alpha\in[0,1]$.
Note that the linear system \eqref{eq:drp_perp} is independent of $\drp_0$.

To better estimate $\|\hat{\tv}_h\|_{\X}$ by $\|f_h\|_{\Y_h}$, we transform the linear system \eqref{eq:drp_perp} into another form.
We know from \eqref{def:Dh} that $(\tilde{D}_h \drp^\perp)_{\ds} = \frac12\big((\tilde{D}_h \drp^\perp)_{\ds+1} + (\tilde{D}_h \drp^\perp)_{\ds-1}\big)$. Taking $k=\ds-1,~\ds,~\ds+1$ in \eqref{eq:drp_perp}, we obtain the following linear system:
\begin{equation}\label{eq:drp_perp1}
P_{h,\sad}
\begin{pmatrix}
(\tilde{D}_h\drp^\perp)_{\ds-1} \\[1ex]
(\tilde{D}_h\drp^\perp)_{\ds+1} \\[1ex]
\drp^\perp_{\ds}
\end{pmatrix} = 
\begin{pmatrix}
\frac{ F^\perp_{\ds-1} - F^\perp_{\ds} }{-h} \\[1ex]
\frac{F^\perp_{\ds+1} - F^\perp_{\ds} }{h} \\[1ex]
F^\perp_{\ds}
\end{pmatrix},
\end{equation}
where 
\begin{align}
\label{def:Ph-sad}
& P_{h,\sad}:=
\begin{pmatrix}
B_{\ds-1} -\frac{\ml(\alpha_{\ds})}{2h} I & -\frac{\ml(\alpha_{\ds})}{2h} I & \frac{A(\alpha_{\ds}) - A(\alpha_{\ds-1})}{h}   \\[1ex]
-\frac{\ml(\alpha_{\ds})}{2h} I & B_{\ds+1} -\frac{\ml(\alpha_{\ds})}{2h} I& \frac{A(\alpha_{\ds+1}) - A(\alpha_{\ds})}{h} \\[1ex]
-\frac{\ml(\alpha_{\ds})}{2} I & -\frac{\ml(\alpha_{\ds})}{2} I & A(\alpha_{\ds})
\end{pmatrix} \qquad {\rm with}
\\[1ex] \nonumber
& B_{\ds-1} := I + h \frac{A(\alpha_{\ds-1})}{\ml(\alpha_{\ds-1})} \qquad{\rm and}\qquad
 B_{\ds+1} := I - h \frac{A(\alpha_{\ds+1})}{\ml(\alpha_{\ds+1})}.
\end{align}
For any $\zeta\in(0,\frac{\sad}{2})$, there exists a $\ddelta \in \Z_{+}$ such that 
\begin{equation}
\label{eq:ddelta}
\ddelta h \leq \zeta < (\ddelta + 1)h.
\end{equation}
Taking $k=1,\cdots,\ds-2$ in \eqref{eq:drp_perp} and combining with
\begin{equation*}
-h\sum_{t=k}^{\ds-1} (\tilde{D}_h\drp^\perp)_t + \drp^\perp_{\ds} = \drp^\perp_k =  h\sum_{t=0}^{k-1} (\tilde{D}_h\drp^\perp)_t,\quad h \sum_{t=0}^{\ddelta} (\tilde{D}_h\drp^\perp)_t = \drp^\perp_{\ddelta+1}, 
\end{equation*}
we obtain the following linear systems:
\begin{align}
\label{eq:drp_perp2}
& \frac{B_k}{\alpha_{k}-\alpha_{\ds}} (\tilde{D}_h\drp^\perp)_{k} 
= \frac{A(\alpha_k) - A(\alpha_{\ds})}{\alpha_{k}-\alpha_{\ds}} \drp^\perp_{\ds} + \frac{\ml(\alpha_{\ds})}{\alpha_{k}-\alpha_{\ds}} (\tilde{D}_h\drp^\perp)_{\ds} 
\nonumber
\\[1ex]
& \hskip 3cm - \frac{F^\perp_k - F^\perp_{\ds}}{\alpha_{k}-\alpha_{\ds}}  - \frac{h A(\alpha_k)}{\alpha_{k}-\alpha_{\ds}}\sum_{t=k+1}^{\ds-1} (\tilde{D}_h\drp^\perp)_t 
\qquad{\rm for}~\ds-\ddelta\leq k\leq \ds-2,
\\[1ex]
\label{eq:drp_perp3}
& \hskip 1.2cm  B_k  (\tilde{D}_h\drp^\perp)_{k}  = - h A(\alpha_k)\sum_{t=k+1}^{\ds-1} (\tilde{D}_h\drp^\perp)_t + A(\alpha_k) \drp^\perp_{\ds} - F^\perp_k 
\end{align}
for $\ddelta+1\leq k\leq \ds-\ddelta-1$ and
\begin{equation}
\label{eq:drp_perp4}
    P_{h,\zeta} \begin{pmatrix}
    (\tilde{D}_h\drp^\perp)_0 \\[1ex]
    (\tilde{D}_h\drp^\perp)_1 \\[1ex]
    \vdots \\[1ex]
    (\tilde{D}_h\drp^\perp)_{\ddelta}
    \end{pmatrix} = \begin{pmatrix}
    \drp^\perp_{\ddelta+1} \\[1ex]
    \frac{F^\perp_1}{\alpha_1} \\[1ex]
    \cdots \\[1ex]
    \frac{F^\perp_{\ddelta}}{\alpha_{\ddelta}}
    \end{pmatrix},
\end{equation}
where 
\begin{align}
\label{eq:Bk}
B_k &:= I + h \frac{A(\alpha_{k})}{\ml(\alpha_{k})}  \qquad{\rm for}~ 1\leq k \leq \ds-2 \hskip 2cm {\rm and}
\\[1ex]
\label{def:Phd}
P_{h,\zeta} &:= \begin{pmatrix}
h I & h I & h I &  \cdots & h I \\[1ex]
\frac{h A(\alpha_1)}{\alpha_1} & -\frac{\ml(\alpha_1)}{\alpha_1}I &   \\[1ex]
\frac{h A(\alpha_2)}{\alpha_2} & \frac{h A(\alpha_2)}{\alpha_2} & -\frac{\ml(\alpha_2)}{\alpha_2}I & & &  
\\[1ex]
\vdots & \vdots & \vdots & \ddots &  & \\[1ex]
\frac{h A(\alpha_{\ddelta})}{\alpha_{\ddelta}} & \frac{h A(\alpha_{\ddelta})}{\alpha_{\ddelta}} & \frac{h A(\alpha_{\ddelta})}{\alpha_{\ddelta}} & \cdots & -\frac{\ml(\alpha_{\ddelta})}{\alpha_{\ddelta}}I
\end{pmatrix}.
\end{align}

We have rewritten \eqref{eq:drp_perp} into \eqref{eq:drp_perp1}, \eqref{eq:drp_perp2}, \eqref{eq:drp_perp3} and \eqref{eq:drp_perp4}, which are used to bound $(\tilde{D}_h\drp^\perp)_k$ at the saddle, near the saddle, between the saddle and the minimizer $y^A_{\rm M}$, and near the minimizer $y^A_{\rm M}$, respectively. For $k>\ds+1$ we have the similar linear systems. 
To estimate $|\tilde{D}_h\drp^\perp|_{\infty}$, we only need to analyze the coefficient matrix in these linear systems.

We then consider the problem in the tangent direction $\mv'$.
By substituting \eqref{eq:tdmv} into \eqref{pb:tdmv}, taking $k$-th component, multiplying both sides with $\evt_0(\alpha_k)^{\rm T}$, we obtain
\begin{align}
\label{eq:drp0}
& \alpha_k(\alpha_k-1)(\alpha_k-\sad)\left( (\tilde{D}_h\drp_0)_k - \sum_{i=0}^{N-1}\sum_{t=1}^{M-1}\drp_{t,i}  c_{h,t,i} \right) = (F_{0})_k\qquad{\rm for}~k=1,\cdots,M-1,
\end{align}
where
\begin{align}
\label{def:c_hti}
c_{h,t,i} & :=  \int_0^1 g_{h,t,i}(s)^{\rm T}\evt'_0(s)\dd s\quad{\rm for}~\alpha\in[0,1].
\end{align}
We see that $\drp_0$ depends on the component $\drp^\perp$.
In our analysis, we will first focus on the problem \eqref{eq:drp_perp} in the subspace $\mv'^\perp$, and then put things back into \eqref{eq:drp0} in tangent direction $\mv'$. 

So far, we have obtained a spectral representation of \eqref{pb:tdmv}. 
We shall next choose an appropriate $\zeta$ and turn to the analysis of the coefficient matrices in \eqref{eq:drp_perp1}, \eqref{eq:drp_perp2}, \eqref{eq:drp_perp3} and \eqref{eq:drp_perp4}.
Define 
\begin{align}
\label{def:sigmaj}
& \sigma_j :=  \frac{\ev_j(0)}{\ml'(0)} \qquad{\rm for}~1\leq j\leq N-1 \qquad{\rm and}
\\[1ex]
\label{eq:n_j}
& n_j(\alpha) := \frac{\ev_j(\alpha)}{\ml(\alpha)} - \frac{\sigma_j}{\alpha}
\quad\quad{\rm for}~\alpha\in \big(0,\frac{\sad}{2}\big],~1\leq j\leq N-1.
\end{align}
We have from assumption {\bf (B)} that $\sigma_j > 1$ for $1\leq j\leq N-1$. Since
\begin{align}
\label{estimate:n}
\left\lvert \lim_{\alpha\rightarrow 0^+} n_j(\alpha)\right\rvert &= \left\lvert \lim_{\alpha\rightarrow 1^-}\frac{z_j(\alpha)\alpha - \sigma_j\ml(\alpha)}{\alpha\ml(\alpha)}\right\rvert = \left\lvert \lim_{\alpha\rightarrow 1^-}\frac{z'_j(\alpha)\alpha + z_j(\alpha) - \sigma_j\ml'(\alpha)}{\alpha\ml'(\alpha) + \ml(\alpha)} \right\rvert
\nonumber
\\[1ex]
& = \left\lvert \lim_{\alpha\rightarrow 0^+}\frac{2z'_j(\alpha) + z''_j(\alpha)\alpha - \sigma_j\ml''(\alpha)}{\alpha\ml''(\alpha) + 2\ml'(\alpha)} \right\rvert = \left\lvert \frac{2z'_j(0)- \sigma_j\ml''(0)}{2\ml'(0)} \right\rvert  < C
\end{align}
with $C$ a constant depending only on $\ml$ and $z_j$, we have that $n_j$ is bounded on $(0,\frac{\sad}{2}]$. 
Define
\begin{equation}
\label{def:bar-sigma}
\bar{\sigma} := \frac{1}{2} \left( 1 + \max_{1\leq j\leq N-1} \sigma_j \right).
\end{equation}
From the fact that $\sigma_j > 1$ for $1\leq j\leq N-1$ , we have $\bar{\sigma} > 1$.
Therefore, there exists $\zeta\in(0,\frac{\sad}{2})$ such that $\ev_j(\alpha)>0$ for $\alpha\in[\sad-\zeta-2h,\sad+h],~1\leq j\leq N-1$, $\frac{1}{2}\ml'(0) \leq \lVert \ml'\rVert_{\Cz([0,\zeta];\R)} \leq 2 \ml'(0)$ and
\begin{equation}
\label{eq:delta}
\frac{\sigma_j -\bar{\sigma}}{\zeta} \geq \|n_j\|_{\Cz((0,\frac{\sad}{2}];\R)} \qquad {\rm for}~1\leq j\leq N-1. 
\end{equation}
This together with \eqref{eq:n_j} implies
\begin{equation}
\label{eq:delta-d}
\frac{\ev_j(\alpha)}{\ml(\alpha)} = \frac{\sigma_j}{\alpha} + n_j(\alpha) \geq \frac{\bar{\sigma}}{\alpha}
\qquad\forall~\alpha\in(0,\zeta] .
\end{equation}

\begin{lemma}
\label{le:Bk}
Assume that the assumptions of Theorem \ref{le:stab_dmep} are satisfied.
Let $\bar\sigma$, $\zeta$, $\ddelta$ and $B_k$' be given by \eqref{def:bar-sigma}, \eqref{eq:delta}, \eqref{eq:ddelta} and \eqref{eq:Bk}, respectively. 
Then for sufficient small $h$, $B_k$'s are invertible and there exist a constant $C>0$ depending only on $\ml$ and $A$ such that 
\begin{align}
\label{eq:Bk1}
\lVert B_k^{-1} \rVert_{\infty} \leq \frac{\bar{\sigma}}{k+\bar{\sigma}} \qquad &{\rm for}~ 1\leq k\leq \ddelta,
\\[1ex]
\label{eq:Bk2}
\lVert B_k^{-1} \rVert_{\infty} \leq 1+Ch
\qquad &{\rm for}~ \ddelta+1 \leq k\leq \ds-\ddelta-1, 
\\[1ex]
\label{eq:Bk3}
\lVert B_k^{-1} \rVert_{\infty} < \frac{1}{ \alpha_{\ds}- \alpha_k }
\qquad &{\rm for}~ \ds-\ddelta\leq k\leq \ds-2,
\end{align}
where the constants $C$ depend only on $\ml$ and $A$. Moreover, we have
\begin{equation}
\label{eq:Bk4}
\lVert B_k^{-1} - I \rVert_{\infty} \leq \frac{C}{k} 
\qquad {\rm for}~ 1\leq k\leq \ddelta.
\end{equation}
\end{lemma}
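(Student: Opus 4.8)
The plan is to analyze each $B_k = I + h A(\alpha_k)/\ml(\alpha_k)$ directly, exploiting that $A(\alpha_k)$ is diagonal, so $\|B_k^{-1}\|_\infty$ reduces to controlling the reciprocals of the diagonal entries $1 + h z_j(\alpha_k)/\ml(\alpha_k)$ for $1 \le j \le N-1$. First I would treat the range $1 \le k \le \ddelta$, i.e.\ $\alpha_k \le \delta$, which is the heart of the matter. Here I use the key lower bound \eqref{eq:delta-d}: $z_j(\alpha_k)/\ml(\alpha_k) \ge \bar\sigma/\alpha_k = \bar\sigma/(kh)$. Therefore each diagonal entry satisfies $1 + h z_j(\alpha_k)/\ml(\alpha_k) \ge 1 + \bar\sigma/k = (k+\bar\sigma)/k > 0$, so $B_k$ is invertible and, since $B_k^{-1}$ is diagonal with positive entries bounded by $k/(k+\bar\sigma)$, we get $\|B_k^{-1}\|_\infty \le k/(k+\bar\sigma) \le \bar\sigma/(k+\bar\sigma)$ — wait, I should double-check the direction of that last inequality; since $\bar\sigma > 1$ one has $k/(k+\bar\sigma) \le \bar\sigma/(k+\bar\sigma)$ iff $k \le \bar\sigma$, which need not hold, so instead I note $k/(k+\bar\sigma) < 1$ always and the sharper bound in \eqref{eq:Bk1} comes from comparing with $\sigma_j > \bar\sigma$: actually the correct route is that each entry equals $1/(1 + h z_j/\ml) \le 1/(1+\bar\sigma/k) = k/(k+\bar\sigma)$, and then one rewrites using $\bar\sigma \le \sigma_j$; I would present whichever of $k/(k+\bar\sigma)$ or $\bar\sigma/(k+\bar\sigma)$ is actually needed downstream, following the bound as stated. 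For \eqref{eq:Bk4}, $\|B_k^{-1} - I\|_\infty = \max_j |h z_j(\alpha_k)/\ml(\alpha_k)| / |1 + h z_j(\alpha_k)/\ml(\alpha_k)|$; using $h z_j/\ml \ge \bar\sigma/k$ from above and boundedness of $n_j$ (equivalently $z_j/\ml \le \sigma_j/\alpha + O(1)$ on $(0,\sad/2]$), the numerator is $O(1/k)$ and the denominator is $\ge 1$, giving the claim.

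Next, for the intermediate range $\ddelta+1 \le k \le \ds - \ddelta - 1$, i.e.\ $\delta \le \alpha_k \le \sad - \delta$ (up to $O(h)$), I use that on this closed subinterval bounded away from $0$ and $\sad$, Lemma \ref{lemma:lambda}(ii)--(iii) and \eqref{eq:ev-1} give $\ml(\alpha) > 0$ and $z_j(\alpha) > 0$ (the latter needs $\delta$ chosen, per the line before \eqref{eq:delta}, so that $z_j > 0$ on $[\sad-\delta-2h,\sad+h]$, together with continuity and positivity at $0,\sad$; since this is a compact interval with no zero of $z_j$ or $\ml$ except the endpoints excluded, all quantities are bounded and bounded below). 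Hence each diagonal entry of $B_k$ equals $1 + h\,\theta_{k,j}$ with $0 \le \theta_{k,j} = z_j(\alpha_k)/\ml(\alpha_k) \le C$ uniformly, so $B_k$ is invertible and the diagonal entries of $B_k^{-1}$ lie in $[1/(1+Ch),\,1] \subset [1-Ch,\,1]$, yielding $\|B_k^{-1}\|_\infty \le 1 \le 1 + Ch$; I would actually state the two-sided bound $\|B_k^{-1}\|_\infty \le 1+Ch$ as written, which follows a fortiori.

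Finally, for $\ds - \ddelta \le k \le \ds - 2$, i.e.\ $\alpha_k$ approaching $\sad$ from below, the factor $\ml(\alpha_k)$ is positive but tending to $0$ like $(\alpha_k - \sad)$ (by Lemma \ref{lemma:lambda}(iii), $\ml(\alpha)/[\alpha(\alpha-1)(\alpha-\sad)]$ is bounded above and below, and $\alpha_k-\sad < 0$ so $\ml(\alpha_k)>0$ for $\alpha_k<\sad$, using $\alpha_k(\alpha_k-1)<0$). Thus $z_j(\alpha_k)/\ml(\alpha_k) \ge c/|\alpha_k - \sad|$ for some $c>0$ (using $z_j$ bounded below near $\sad$), so the diagonal entry $1 + h z_j/\ml \ge 1 + hc/|\alpha_k-\sad| = 1 + hc/(\alpha_\ds - \alpha_k)$; writing $\alpha_\ds - \alpha_k = mh$ with $m = \ds - k \ge 2$, the entry is $\ge 1 + c/m$, and $\|B_k^{-1}\|_\infty \le m/(m+c)$. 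To match \eqref{eq:Bk3} I need $m/(m+c) < 1/m = h/(\alpha_\ds-\alpha_k)$, i.e.\ $m^2 < m+c$, which fails for large $m$ — so I expect the actual argument here needs the sharper constant: since $\sigma_j$-type analysis near $\sad$ (analogous to \eqref{eq:delta-d} but at the saddle) should give $z_j/\ml \ge \sigma'/|\alpha - \sad|$ with an exponent playing the role of $\bar\sigma$, and combined with the good choice of $\delta$ one obtains the entry $\ge 1 + (\text{large})/m$; I would reuse the near-$0$ computation transplanted to near-$\sad$. This near-saddle estimate \eqref{eq:Bk3} is the main obstacle: it requires tracking the precise blow-up rate of $1/\ml$ at $\sad$ against the $\pm$ sign of $\ml$ on either side and matching it to the weight $\alpha_\ds - \alpha_k$, and getting the strict inequality with the right constant is delicate; the other three bounds are comparatively routine diagonal estimates once $\delta$ is fixed as in \eqref{eq:delta}.
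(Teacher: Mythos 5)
Your treatment of \eqref{eq:Bk1}, \eqref{eq:Bk2} and \eqref{eq:Bk4} is essentially the paper's argument (reduce to the diagonal entries and bound $1/(1+hz_j(\alpha_k)/\ml(\alpha_k))$ via \eqref{eq:delta-d}), and you are right to be suspicious of the numerator in \eqref{eq:Bk1}: the computation yields $\lVert B_k^{-1}\rVert_\infty \le k/(k+\bar\sigma)$, which is also exactly the form used downstream (see \eqref{eq:ineq3}, where the product $\prod_t t/(t+\bar\sigma)$ appears), so the $\bar\sigma$ in the numerator of \eqref{eq:Bk1} is a typo and your derivation is the correct one. One small caveat in the intermediate range: you assert $z_j(\alpha_k)>0$ there by a compactness/no-zero argument, but the hypotheses only give positivity of $z_j$ at $0$, $\sad$, $1$ and on $[\sad-\delta-2h,\sad+h]$; $z_j$ could in principle change sign in the interior. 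This costs nothing, since all you need is $|z_j(\alpha_k)h/\ml(\alpha_k)|\le Ch$ (from boundedness of $z_j$ and the positive lower bound on $|\ml|$ away from $0$ and $\sad$), which gives diagonal entries $\ge 1-Ch$ and hence $\lVert B_k^{-1}\rVert_\infty\le 1/(1-Ch)\le 1+Ch$.

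The genuine gap is \eqref{eq:Bk3}, which you leave unproven after declaring it the main obstacle. The difficulty is an artifact of a misreading: writing $m=\ds-k$ you set $1/(\alpha_{\ds}-\alpha_k)=1/m$, but in fact $\alpha_{\ds}-\alpha_k=mh$, so $1/(\alpha_{\ds}-\alpha_k)=1/(mh)\ge 1/(\ddelta h)\ge 1/\delta>2/\sad>1$. Hence \eqref{eq:Bk3} asks for nothing sharper than $\lVert B_k^{-1}\rVert_\infty<1$, which is immediate: for $\ds-\ddelta\le k\le\ds-2$ one has $\alpha_k\in[\sad-\delta-2h,\sad)$, so $z_j(\alpha_k)>0$ by the choice of $\delta$, and $\ml(\alpha_k)>0$ (by Lemma \ref{lemma:lambda}(iii) $\ml$ does not vanish on $(0,\sad)$, and $\ml'(0)>0$ forces it to be positive there); thus every diagonal entry of $B_k$ exceeds $1$ and $\lVert B_k^{-1}\rVert_\infty<1<1/(\alpha_{\ds}-\alpha_k)$. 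No transplanted near-saddle analogue of \eqref{eq:delta-d} and no delicate constant-matching is needed. With this one-line correction your proof is complete and coincides with the paper's.
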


\begin{proof}
Since $B_k$ is a diagonal matrix, we have
\begin{equation}
\label{eq:Bk-1}
\lVert B_k^{-1} \rVert_{\infty} 
= \max_{1\leq j\leq N-1} \left| \frac{1}{1+ \frac{z_j(\alpha_k) h}{\ml(\alpha_k)}}\right| \qquad{\rm and}\qquad \lVert B_k^{-1}-I \rVert_{\infty} 
= \max_{1\leq j\leq N-1} \left|\frac{ \frac{z_j(\alpha_k) h}{\ml(\alpha_k)} }{1+\frac{z_j(\alpha_k) h}{\ml(\alpha_k)}} \right|
\end{equation}
for $1\leq k\leq \ds-1$.
For $1\leq k\leq \ddelta$,  using \eqref{def:bar-sigma} and \eqref{eq:delta-d} yields
\begin{align*}
1 + \frac{\ev_j(\alpha_k)}{\ml(\alpha_k)}h \geq 1 + \frac{\bar{\sigma} h }{\alpha_k}\geq 1 + \frac{\bar{\sigma}}{k} \qquad{\rm for}~1\leq j\leq N-1.
\end{align*}
This together with \eqref{eq:Bk-1} leads to
\begin{align*}
\lVert B_k^{-1} \rVert_{\infty} \leq \frac{k}{k+\bar{\sigma}} \qquad{\rm and}\qquad
\lVert B_k^{-1}-I \rVert_{\infty}  \leq  \frac{C h}{\ml(\alpha_k)}\leq \frac{C}{k} \frac{\alpha_{k}}{\int_0^{\alpha_k} \lvert\ml'(s)\rvert \dd s} \leq \frac{C}{k} ,
\end{align*}
where $C$ depends on $\ml$ and $A$. 
This completes the proof for \eqref{eq:Bk1} and \eqref{eq:Bk4}.
	
For $\ddelta+1 \leq k\leq \ds-\ddelta-1$, since $\lvert \ml(\alpha_k) \rvert$ have both upper bound and lower bound, we have from \eqref{eq:Bk-1} that
\begin{align*}
\lVert B_k^{-1} \rVert_{\infty} \leq \frac{1}{1 - Ch} \leq 1 + Ch ,
\end{align*}
where $C$ depends on $\ml$ and $A$. This completes the proof for \eqref{eq:Bk2}.
	
For $\ds-\ddelta\leq k\leq \ds-2$, using the fact that $\ev_j(\alpha_k) > 0$ and $\ml(\alpha_k)>0$, we obtain
\begin{equation*}
\left\lVert B_k ^{-1} \right\rVert_{\infty} 
= \max_{1\leq j\leq N-1} \frac{1}{1+ \frac{z_j(\alpha_k) h}{\ml(\alpha_k)}} < 1 < \frac{1}{\alpha_{\ds}-\alpha_k}. \qedhere
\end{equation*}
\end{proof}

\begin{lemma}
\label{le:Ph-d}
Assume that the assumptions of Theorem \ref{le:stab_dmep} are satisfied.
Let $\zeta$, $\ddelta$, $P_{h,\sad}$ and $P_{h,\zeta}$ be given by \eqref{eq:delta}, \eqref{eq:ddelta}, \eqref{def:Ph-sad} and \eqref{def:Phd}, respectively.
Then for sufficiently small $h$, $P_{h,\sad}$ and $P_{h,\zeta}$ are invertible and there exist constants $C>0$ depending on $\ml$ and $A$ such that
\begin{equation}
\label{est:Ph}
\lVert P_{h,\sad}^{-1} \rVert_{\infty} \leq C
\qquad{\rm and}\qquad 
\lVert P_{h,\zeta}^{-1} \rVert_{\infty} \leq C.
\end{equation}
\end{lemma}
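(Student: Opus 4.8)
The plan is to treat the two matrices separately, since $P_{h,\sad}$ is of fixed size $3(N-1)$ while the size of $P_{h,\delta}$ grows like $\ddelta\sim\delta/h$.

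For $P_{h,\sad}$: The decisive observation is that, since $\mv\in C^2$, $\ml(\sad)=0$ and $\alpha_{\ds}\in(\sad-h,\sad+h)$, we have $\ml(\alpha_{\ds})=O(h)$, while by Lemma~\ref{lemma:lambda}(ii)--(iii) the function $\ml$ is positive on $(0,\sad)$ and negative on $(\sad,1)$ with $\ml'(\sad)\neq0$, so that $\ml(\alpha_{\ds-1})>0>\ml(\alpha_{\ds+1})$ and $|\ml(\alpha_{\ds\mp1})|/h$ is comparable to $|\ml'(\sad)|$ for $h$ small. Hence all entries of $P_{h,\sad}$ are bounded uniformly in $h$, and it suffices to prove invertibility with a uniform lower bound on the smallest singular value. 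Since $A(\alpha_{\ds})$ and $A(\alpha_{\ds\pm1})$ are diagonal, $P_{h,\sad}$ block-diagonalizes into $N-1$ scalar $3\times3$ matrices. I would eliminate the $(3,3)$-entry $z_j(\alpha_{\ds})$, which is positive and bounded below (using that $z_j>0$ on $[\sad-\delta-2h,\sad+h]$, the interval built into the choice of $\delta$); because the entries of row/column $3$ off the diagonal are $O(h)$, this reduces the question to the upper-left $2\times2$ block perturbed by $O(h)$. That block has diagonal entries comparable to $|\ml'(\sad)|$ with the correct sign, and off-diagonal entries of size $O(|\ml(\alpha_{\ds})|/h)$, which is strictly smaller because $|\alpha_{\ds}-\sad|<h$; one checks that its symmetric part is $\succeq c\,|\ml'(\sad)|\,I$ uniformly in $h$, which gives the bound.

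For $P_{h,\delta}$: Here the matrix size is unbounded, so I would exploit its triangular structure rather than argue through a determinant. Writing the unknown as $x=(x_0,\dots,x_{\ddelta})$ and the right-hand side as $b=(b_0,\dots,b_{\ddelta})$, row $0$ reads $h\sum_{t=0}^{\ddelta}x_t=b_0$, and for $1\le k\le\ddelta$ row $k$ gives $x_k=\frac{hA(\alpha_k)}{\ml(\alpha_k)}(x_0+\cdots+x_{k-1})-\frac{\alpha_k}{\ml(\alpha_k)}b_k$. Setting $\sigma_k:=x_0+\cdots+x_k$ turns these into the recursion $\sigma_k=B_k\sigma_{k-1}-\frac{\alpha_k}{\ml(\alpha_k)}b_k$ with $B_k$ as in \eqref{eq:Bk}, subject to $\sigma_{\ddelta}=b_0/h$; the system is then solved by selecting $\sigma_0=x_0$ to meet this terminal condition and reading off $x_k=\sigma_k-\sigma_{k-1}$. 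Running the recursion \emph{backwards}, $\sigma_{k-1}=B_k^{-1}(\sigma_k+\frac{\alpha_k}{\ml(\alpha_k)}b_k)$, and using Lemma~\ref{le:Bk}, namely $\|B_k^{-1}B_{k+1}^{-1}\cdots B_{\ddelta}^{-1}\|_\infty\le\prod_{m=k}^{\ddelta}\frac{m}{m+\bar\sigma}\lesssim(k/\ddelta)^{\bar\sigma}$, together with the lower bound $|\ml(\alpha_k)|\gtrsim\alpha_k$ from Lemma~\ref{lemma:lambda}(iii) (so that $\frac{\alpha_k}{|\ml(\alpha_k)|}\le C$ and, since $b_0=\drp^\perp_{\ddelta+1}$ and $b_k=F^\perp_k/\alpha_k$ are components of the data, $\frac{\alpha_k}{|\ml(\alpha_k)|}|b_k|\le C|b|_\infty$) and the upper bound $\|hA(\alpha_k)/\ml(\alpha_k)\|_\infty\le\frac{\sigma_{\max}}{k}+Ch$ coming from \eqref{eq:n_j}--\eqref{estimate:n}, one estimates $|\sigma_k|$ and hence $|x_k|$ in terms of $|b|_\infty$. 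The crux is that although the terminal value $\sigma_{\ddelta}=b_0/h$ is as large as $h^{-1}|b|_\infty$, the decay factor $(k/\ddelta)^{\bar\sigma}$ with $\ddelta\sim\delta/h$ and, crucially, $\bar\sigma>1$ (which is exactly where assumption {\bf (B)}, via $\sigma_j>1$, is used) compensates for it: $\|hA(\alpha_k)/\ml(\alpha_k)\|_\infty\cdot(k/\ddelta)^{\bar\sigma}h^{-1}\lesssim\alpha_k^{\bar\sigma-1}\delta^{-\bar\sigma}\le\delta^{-1}$, giving $|x_k|\le C|b|_\infty$ for every $k$, and likewise for the contribution of the $b_k$-terms, which is summable because $\sum_k k^{-\bar\sigma}<\infty$.

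The main obstacle is the $P_{h,\delta}$ bound. Because the matrix is not of fixed size, one cannot rely on a compactness/determinant argument as for $P_{h,\sad}$, and a naive forward substitution yields bounds blowing up like $\ddelta^{\sigma_{\max}-1}$; the remedy is to solve by the backward recursion above and to use the sharp contraction estimate $\|B_k^{-1}\|_\infty\le\frac{k}{k+\bar\sigma}$ from Lemma~\ref{le:Bk}, together with the observation that $\bar\sigma>1$ is precisely the margin needed to absorb the $h^{-1}$ introduced by the first row. The $P_{h,\sad}$ part, by contrast, is a robustness statement for a fixed-size matrix and follows from the sign and magnitude information on $\ml$ near the saddle.
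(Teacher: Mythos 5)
Your treatment of $P_{h,\delta}$ is correct and is in substance the same as the paper's: the paper writes $P_{h,\delta}^{-1}=Q_{h,\delta}T_{h,\delta}$ explicitly, and the entries of $Q_{h,\delta}$ are exactly the products $B_{k+1}^{-1}\cdots B_{k'}^{-1}$ that your backward recursion in the partial sums $\sigma_k=x_0+\cdots+x_k$ generates. Both arguments then rest on the contraction bound $\lVert B_k^{-1}\rVert_\infty\le k/(k+\bar\sigma)$ from Lemma \ref{le:Bk}, the product estimate \eqref{eq:ineq1}, and the fact that $h^{-1}\ddelta^{-\bar\sigma}\sim h^{\bar\sigma-1}\delta^{-\bar\sigma}$ stays bounded precisely because $\bar\sigma>1$, i.e.\ because of {\bf (B)}. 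Your identification of this as the crux is exactly right, and organizing it as a back-substitution rather than an explicit inverse formula is a legitimate (arguably cleaner) variant.

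The $P_{h,\sad}$ part has a gap. The reduction to the upper-left block is fine (the $(3,1),(3,2)$ entries are $O(h)$ and $z_j(\alpha_{\ds})$ is bounded below, although the $(1,3),(2,3)$ entries are $O(1)$, not $O(h)$ as you state — the Schur complement is still an $O(h)$ perturbation). But the claim that the resulting symmetric $2\times2$ block $\left(\begin{smallmatrix} b_--\mu & -\mu\\ -\mu & b_+-\mu\end{smallmatrix}\right)$, with $\mu=\ml(\alpha_{\ds})/(2h)$ and $b_\mp$ the diagonal entries of $B_{\ds\mp1}$, has symmetric part $\succeq c\,|\ml'(\sad)|\,I$ does not follow from the sign and size information you invoke. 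The entries $b_\mp=1+z_j(\alpha_{\ds\mp1})h/\ml(\alpha_{\ds\mp1})$ are only bounded below by $1$, and are close to $1$ whenever $z_j(\sad)\ll|\ml'(\sad)|$, while $|\mu|$ can be as large as a fixed fraction of $|\ml'(\sad)|$ (since $|\alpha_{\ds}-\sad|$ is only guaranteed to be $\le h/2+O(h^2)$, not $o(h)$). Taking $|\ml'(\sad)|$ large, $z_j(\sad)$ small, and $\alpha_{\ds}<\sad$ so that $\mu>0$ gives $b_-b_+-\mu(b_-+b_+)<0$: the block is then indefinite, so no uniform coercivity constant exists and your argument breaks at exactly the step you flag as "one checks". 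The paper instead identifies the limiting matrix $P_{\sad}$, which is block upper triangular with diagonal blocks $B_{\sad}=I-A(\sad)/\ml'(\sad)$ and $A(\sad)$, both uniformly invertible because $\ml'(\sad)<0<z_j(\sad)$, and concludes by perturbation. To repair your route you need a quantity that is genuinely controlled uniformly in the offset $(\sad-\alpha_{\ds})/h$ — e.g.\ a direct lower bound on the $3\times3$ determinant — rather than positive definiteness of the symmetric part, which is the wrong invariant for this matrix.
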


\begin{proof}
Since $\displaystyle \lim_{h\rightarrow 0^+}\alpha_{\ds} = \sad$ and $\ml(\sad)=0$, we have
\begin{align*}
    P_{\sad} := \lim_{h\rightarrow 0^+} P_{h,\sad} = 
    \begin{pmatrix}
    B_{\sad}  &  & A'(\sad)   \\[1ex]
    & B_{\sad} & A'(\sad) \\[1ex]
    &  & A(\sad)
    \end{pmatrix},
    \qquad P_{\sad}^{-1} = 
    \begin{pmatrix}
    B_{\sad}^{-1}  &  & - B_{\sad}^{-1} A'(\sad)A(\sad)^{-1}   \\[1ex]
    & B_{\sad}^{-1} & - B_{\sad}^{-1} A'(\sad)A(\sad)^{-1}  \\[1ex]
    &  & A(\sad)^{-1}
    \end{pmatrix},
\end{align*}
where $B_{\sad}:= I - \frac{A(\sad)}{\ml'(\sad)} \in \R^{(N-1)\times(N-1)}$. From Lemma \ref{lemma:lambda} (ii) and the fact that $\ev_j(\sad)>0$ for $1\leq j\leq N-1$, we obtain
\begin{equation*}
\lVert B_{\sad}^{-1} \rVert_{\infty} \leq \max_{1\leq j\leq N-1} \frac{1}{1-\frac{\ev_j(\sad)}{\ml'(\sad)}}\leq 1,
\qquad \lVert A(\sad)^{-1} \rVert_{\infty} \leq \max_{1\leq j\leq N-1} \frac{1}{\ev_j(\sad)}.
\end{equation*}
This implies 
\begin{equation*}
\lVert P_{\sad}^{-1} \rVert_{\infty} 
\leq \lVert B_{\sad}^{-1} \rVert_{\infty} + \lVert B_{\sad}^{-1} \rVert_{\infty}\lVert A'(\sad) \rVert_{\infty}\lVert A(\sad)^{-1} \rVert_{\infty} +\lVert A(\sad)^{-1} \rVert_{\infty} \leq C,
\end{equation*}
where $C$ depends only on $\ml$ and $A$. Thus $P_{h,\sad}$ is inveritible and $\lVert P_{h,\sad}\rVert_{\infty}\leq C$ for sufficiently small $h$. This completes the proof for first part of \eqref{est:Ph}.
    
We then investigate the inverse of $P_{h,\zeta}$:
\begin{equation}
\label{eq:Ph-d-1}
P_{h,\zeta}^{-1} = Q_{h,\zeta} T_{h,\zeta},
\end{equation}
where 
\begin{align*}
	Q_{h,\zeta} &:=
	\begin{pmatrix}
	h^{-1} B_{1}^{-1}\cdots B_{\ddelta}^{-1} & -B_{1}^{-1} & -B_1^{-1}B_2^{-1} & \cdots & -B_{1}^{-1}\cdots B_{\ddelta}^{-1} \\[1ex]
	h^{-1} (I-B_1^{-1})B_2^{-1}\cdots B_{\ddelta}^{-1} & B_1^{-1} & (B_1^{-1}-I)B_2^{-1} & \cdots & (B_1^{-1}-I)B_2^{-1}\cdots B_{\ddelta}^{-1} \\[1ex]
	h^{-1} (I-B_2^{-1})B_3^{-1}\cdots B_{\ddelta}^{-1} &  & B_2^{-1} & \cdots & (B_2^{-1}-I)B_3^{-1}\cdots B_{\ddelta}^{-1} \\[1ex]
	\vdots & & & \ddots & \\[1ex]
	h^{-1} (I-B_{\ddelta}^{-1}) & & & & B_{\ddelta}^{-1}
	\end{pmatrix}, \\[1ex]
	T_{h,\zeta} &:= \begin{pmatrix}
	I & & & \\[1ex]
	& -\frac{\alpha_1}{\ml(\alpha_1)} I & & \\[1ex]
	& & \ddots & \\[1ex]
	& & & -\frac{\alpha_{\ddelta}}{\ml(\alpha_{\ddelta})} I
	\end{pmatrix}.
\end{align*}
We see from Lemma \ref{lemma:lambda} (iii) that $\lVert T_{h,\zeta}\rVert_{\infty}\leq C$ where $C$ only depends on $\ml$ and $\zeta$.
To estimate $\lVert Q_{h,\zeta}\rVert_{\infty}$, we first present two inequalities. 
We claim that there exist constants $C_1,C_2,C>0$ depending only on $\bar{\sigma}$ such that
\begin{align}
\label{eq:ineq1}
C_1 \frac{1}{n^{\bar{\sigma}}} \leq \prod_{s=1}^{n} \frac{s}{s+\bar{\sigma}} \leq C_2 \frac{1}{n^{\bar{\sigma}}} \qquad &{\rm for}~ n \in \Z_+ \qquad{\rm and}
\\[1ex]
\label{eq:ineq2}
\frac{1}{n^{\bar{\sigma}}} \leq C \left(\frac{1}{(n-1)^{\bar{\sigma}-1}} - \frac{1}{n^{\bar{\sigma}-1}} \right) \qquad &{\rm for}~ n\geq 2,
\end{align}
The inequality \eqref{eq:ineq1} results from induction and the fact that
\begin{equation*}
\lim_{n\rightarrow +\infty} \frac{n+1}{n+1+\bar{\sigma}} = 1 = \lim_{n\rightarrow +\infty}\frac{n^{\bar\sigma}}{(n+1)^{\bar\sigma}} .
\end{equation*}
Using the monotonicity, there exists a constant $C>0$ depending on $\bar{\sigma}$ such that
\begin{equation*}
\frac{1}{n} \leq \frac{1}{2} \leq C (2^{\bar{\sigma}-1} - 1) \leq C \left(\frac{n^{\bar{\sigma}-1}}{(n-1)^{\bar{\sigma}-1}} - 1 \right) \qquad\forall~n\geq 2,
\end{equation*}
which leads to \eqref{eq:ineq2} by multiplying $n^{1-\bar{\sigma}}$ on both sides.
	
Applying \eqref{eq:ineq1} and Lemma \ref{le:Bk}, we first estimate
\begin{equation}
\label{eq:ineq3}
\left\lVert \prod_{t=k+1}^{k'} B_{t}^{-1} \right\rVert_{\infty} \leq  \prod_{t=k+1}^{k'} \frac{t}{t+\bar{\sigma}} 
\leq  C \frac{(k+1)^{\bar{\sigma}}}{(k')^{\bar{\sigma}}} \leq C(k+1)^{\bar{\sigma}} \left(\frac{1}{(k'-1)^{\bar{\sigma}-1}} - \frac{1}{(k')^{\bar{\sigma}-1}} \right)
\end{equation}
for $1\leq k < k' \leq \ddelta$.
Since $\ddelta$ scales as $O(h^{-1})$, using \eqref{eq:ineq3} and Lemma \ref{le:Bk}, we obtain
\begin{align}
\label{eq:dk0}
& \lVert h^{-1} B_{1}^{-1}\cdots B_{\ddelta}^{-1} \rVert_{\infty} \leq  \frac{C}{ h (\ddelta)^{\bar{\sigma}}} \leq C,
\\[1ex]	
\label{eq:dk2}
& \sum_{k=2}^{\ddelta} \left\lVert \prod_{t=1}^{k} B_{t}^{-1} \right\rVert_{\infty}  \leq C \sum_{k=2}^{\ddelta} \left(\frac{1}{(k-1)^{\bar{\sigma}-1}} - \frac{1}{k^{\bar{\sigma}-1}} \right) \leq C \left( 1 - \frac{1}{\ddelta^{\bar{\sigma}-1}} \right) \leq C,
\end{align}
and
\begin{align}
\label{eq:dk1}
\sum_{t=k+1}^{\ddelta} \lVert (B_k^{-1}-I) \prod_{t'=k+1}^{t} B_{t'}^{-1} \rVert_{\infty}  &\leq \frac{C(k+1)^{\bar{\sigma}}}{k} \sum_{t=k+1}^{\ddelta} \left(\frac{1}{(t-1)^{\bar{\sigma}-1}} - \frac{1}{t^{\bar{\sigma}-1}} \right)
\nonumber 
\\[1ex]
& \leq \frac{C(k+1)}{k} \left( \frac{(k+1)^{\bar{\sigma}-1}}{k^{\bar{\sigma}-1}} - \frac{(k+1)^{\bar{\sigma}-1}}{\ddelta^{\bar{\sigma}-1}} \right) \leq C,
\\[1ex]
\label{eq:dk3}
\lVert h^{-1} (I-B_k^{-1}) \prod_{t=k+1}^{\ddelta} B_{t}^{-1} \rVert_{\infty} & \leq \frac{C}{k h} \frac{(k+1)^{\bar{\sigma}}}{(\ddelta)^{\bar{\sigma}}} \leq C
\end{align}
for $1\leq k\leq \ddelta-1$.
Combining \eqref{eq:dk0}-\eqref{eq:dk3}, we have from \eqref{eq:Ph-d-1} that
\begin{equation*}
\lVert P_{h,\zeta}^{-1} \rVert_{\infty} \leq \lVert Q_{h,\zeta} \rVert_{\infty} \lVert T_{h,\zeta} \rVert_{\infty}\leq C,
\end{equation*}
where $C$ only depends on $\ml$, $\zeta$ and $\bar{\sigma}$. 
This completes the proof of \eqref{est:Ph}.
\end{proof}

Before proving Theorem \ref{le:stab_dmep}, we give the discretization error of the operators $\tilde{D}_h,~D^{-}_h$ and $L_h$. 
Given $\varphi\in C^2\big([0,1];\R^N\big) \cap \U$, we see from \eqref{def:Dh} and \eqref{def:DhL} that there exists a constant $C>0$ independent of $h$, such that
\begin{align}
\label{est:Dh}
|\tilde{D}_h\Pi_h\varphi - \Pi_h\varphi'|_{\infty} + |D^{-}_h\Pi_h\varphi - \Pi_h\varphi'|_{\infty} \leq C h.
\end{align}
This together with the standard error estimate of the numerical integration leads to
\begin{equation}
\label{est:Lh1}
|L_h(\Pi_h\varphi) - L(\varphi)| 
\leq \Big| h \sum_{t=1}^{M} \big( |(D^{-}_h \Pi_h\varphi)_t| - |\varphi'(\alpha_t)|\big)\Big| + \Big| h \sum_{t=1}^{M} \varphi'(\alpha_t)| -\int_0^1 |\varphi'(s)|\dd s \Big| 
\leq Ch.
\end{equation}
We obtain by a similar argument that
\begin{align}
\label{est:Lh2}
|\delta L_h(\Pi_h \varphi)\tdmv - \delta L(\varphi) (P_h\tdmv) | \leq Ch\qquad\forall~\tdmv\in\X_h,
\end{align}
where $P_h$ is defined in \eqref{def:Ph}.

Now we are ready to show the stability of the discrete MEP.

\begin{proof}
[{\bf Proof of Theorem \ref{le:stab_dmep}}]
For any $\tdmv\in\X_h$, we have
\begin{equation}
\label{eq:stab0}
\delta\mathscr{F}_h(\Pi_h \mv) \tdmv = \big( \delta {\mathscr{F}}_h (\Pi_h \mv) \tdmv - \Pi_h\delta\mathscr{F}(\mv)(P_h\tdmv)\big) + \Pi_h\delta\mathscr{F}(\mv)(P_h\tdmv).
\end{equation}
By comparing $\delta\mathscr{F}_h(\Pi_h\mv)\tdmv$ in \eqref{def:dFh} and $\delta\mathscr{F}(\mv)(P_h\tdmv)$ in \eqref{def:dF}, we can obtain from \eqref{est:Dh}, \eqref{est:Lh1} and \eqref{est:Lh2} that 
\begin{equation}
\label{eq:stab1}
\lVert  \delta {\mathscr{F}}_h (\Pi_h \mv) \tdmv - \Pi_h\delta\mathscr{F}(\mv)(P_h\tdmv) \rVert_{\Y_h} \leq C h \lVert P_h\tdmv \rVert_{\X} \leq C h \lVert \tdmv \rVert_{\X_h},
\end{equation}
where $C$ depends only on $E$ and $\mv$.

Now we turn to the second term in \eqref{eq:stab0}.
For any $\tdmv\in\X_h$, let $\hat{\tv}_h = P_h\tdmv$ and $f_h = \Pi_h\delta\mathscr{F}(\mv)\hat{\tv}_h$. With the representation \eqref{eq:tdmv}, using Lemma \ref{le:Bk} and Lemma \ref{le:Ph-d}, we have from \eqref{eq:drp_perp1}, \eqref{eq:drp_perp2}, \eqref{eq:drp_perp3} and \eqref{eq:drp_perp4} that
\begin{equation*}
| (\tilde{D}_h\drp^\perp)_{k} |_{\infty} \leq  C\lVert F^\perp \rVert_{\Y_h}\qquad{\rm for}~0\leq k\leq \ds+1,
\end{equation*}
where $\drp^\perp$ and $F^\perp$ are given by \eqref{def:drpperp} and \eqref{def:Fperp}, respectively.
A similar estimate holds for $\ds+2\leq k\leq M$ and then we have 
\begin{equation}
\label{est:drp_perp}
| \tilde{D}_h\drp^\perp |_{\infty} \leq C\lVert F^\perp \rVert_{\Y_h} \leq  C\lVert f_h \rVert_{\Y_h},
\end{equation}
where $C$ depends only on $\ml$ and $A$.
We obtain by taking $k=1,\cdots,\ds-1,\ds+1,\cdots,M-1$ in \eqref{eq:drp0} that
\begin{equation}
\label{eq:drp0-3}
| \tilde{D}_h\drp_0 |_{\infty} \leq \| F_{0} \|_{\Y_h} + |\drp^\perp|_{\infty} \sum_{t=1}^{M-1} \sum_{j=1}^{N-1} | c_{h,t,j} | + |\drp_0 |_{\infty} \sum_{t=1}^{M-1}| c_{h,t,0} | ,
\end{equation}
where $\drp_0$, $F_0$ and $c_{h,t,i}$ are given by \eqref{def:drpperp}, \eqref{def:Fperp} and \eqref{def:c_hti}, respectively.
From the definition of $g_{h,t,i}$ in \eqref{eq:bas} and $\hat{\X}_h$ in \eqref{def:hatXh}, we know that supp $(g_{h,t,i}) \subset [\alpha_{t-2},\alpha_{t+2}]$ for $1 \leq t\leq  M-1,~0\leq i\leq N-1$. Since $\evt_0\in C^2\big([0,1];\R^N\big)$ and $|\evt_0(\alpha)|^2 \equiv 1$, we have
\begin{align}
\label{eq:drp0-1}
|c_{h,t,0}| & \leq 4h \| \evt_0(\alpha_t)^{\rm T}\evt'_0(\alpha) \|_{\Cz([\alpha_{t-2},\alpha_{t+2}];\R)} 
\leq C h^2 \qquad{\rm and}
\\[1ex] 
\label{eq:drp0-2}
|c_{h,t,j}| & \leq 4 h \left\| g_{h,t,j}^{\rm T}\evt'_0 \right\|_{\Cz([\alpha_{t-2},\alpha_{t+2}];\R)} \leq C h,
\end{align}
where $C$ depends only on $\{ \evt_i\}_{i=0}^{N-1}$.
Combining \eqref{est:drp_perp}-\eqref{eq:drp0-2}, we have that for sufficiently small $h$,
\begin{equation}
\label{est:drp0}
\lvert \tilde{D}_h\drp_0\rvert_{\infty} 
\leq C \lvert \drp^\perp \rvert_{\infty} + C \lVert F_{0} \rVert_{\Y_h}  \leq C \lVert f_h \rVert_{\Y_h}.
\end{equation}
From \eqref{eq:tdmv} we have
\begin{align*}
(D^{-}_h\tdmv)_k 
& = h^{-1} \sum_{i=0}^{N-1} \big( \drp_{k,i}\evt_i(\alpha_k) - \drp_{k-1,i}\evt_i(\alpha_{k-1}) \big)
\\[1ex]
&= \sum_{i=0}^{N-1} \left( \frac{\drp_{k,i}-\drp_{k-1,i}}{h} \evt_i(\alpha_k) + \drp_{k-1,i}\frac{\evt_i(\alpha_k) - \evt_i(\alpha_{k-1}) }{h} \right)\qquad{\rm for}~1\leq k\leq M.
\end{align*}
This together with \eqref{est:drp_perp} and \eqref{est:drp0} implies that for sufficient small $h$, there exists a $C>0$ depending only on $\ml,~A$ and $\{\evt_i\}_{i=0}^{N-1}$ such that
\begin{equation*}
\lVert \tdmv \rVert_{\X_h} \leq C \left| \begin{pmatrix}
    \tilde{D}_h\drp^\perp \\[1ex]
    \tilde{D}_h\drp_{0}
    \end{pmatrix} \right|_{\infty} \leq C \lVert f_h \rVert_{\Y_h}.
\end{equation*}
This indicates
\begin{equation}
\label{eq:stab2}
\|\tdmv\|_{\X_h} \leq C \|\Pi_h \delta\mathscr{F}(\mv) (P_h\tdmv) \|_{\Y_h}\qquad\forall~\tdmv\in\X_h.
\end{equation}
Combining \eqref{eq:stab0}, \eqref{eq:stab1} and \eqref{eq:stab2} completes the proof of \eqref{lemme:stability}.
\end{proof}

\subsection{Convergence analysis}
\label{sec:conv}

We first show in the following theorem that the solutions of \eqref{pb:dmep} converge with respect to the mesh size $h$, and then prove Theorem \ref{thm2:conv} by connecting \eqref{dMEP:NEB} and \eqref{pb:dmep}.

\begin{theorem}
\label{thm:conv}
Let $\mv\in C^3\big( [0,1];\R^{N} \big)$ be a solution of \eqref{mep} and $\Pi_h$ be defined in \eqref{Pih}.
Assume that {\bf (A)} and {\bf (B)} are satisfied.
Then for sufficiently small $h$, there exist a solution $\mv_h \in \U_h$ of \eqref{pb:dmep} and a constant $C$ depending only on $\mv$ and $E$, such that
\begin{align}
\label{theorem:convergence-rate}
\lVert \mv_h -  \Pi_h \mv \rVert_{\X_h} \leq C h .
\end{align}
\end{theorem}

\begin{proof}
The proof is divided into three parts. 
We will first show the consistency and stability of the discrete MEP equation \eqref{pb:dmep}, and then derive the convergence by inverse function theorem \cite[Lemma 2.2]{2011qnl}. 
    
1. {\it Consistency}. 
By comparing \eqref{def:F} and \eqref{def:Fh}, we have from \eqref{est:Dh} and \eqref{est:Lh1} that
\begin{align}
\label{eq:cons}	
\lVert\mathscr{F}_h( \Pi_h \mv) \rVert_{\Y_h}
  = \lVert {\mathscr{F}}_h(\Pi_h \mv) - \Pi_h\mathscr{F}(\mv) \rVert_{\Y_h} \leq C h,
\end{align}
where $C$ only depends on $E$ and $\mv$.
    
2. {\it Stability.} 
It follows directly from Theorem \ref{le:stab_dmep} that there exists a constant $C$ depending on $E$ and $\mv$ such that
\begin{equation}
\label{eq:stab_dmep}
\lVert \delta\mathscr{F}_h(\Pi_h\mv) \tdmv\rVert_{\Y_h} \geq C \lVert \tdmv \rVert_{\X_h} \qquad \forall~\tdmv\in\X_h.
\end{equation}

3. {\it Application of the inverse function theory.}
With the consistency \eqref{eq:cons} and stability \eqref{eq:stab_dmep}, we can apply the inverse function theorem \cite[Lemma 2.2]{2011qnl} to obtain that, for $h$ sufficiently small, the solution $\mv_h$ of \eqref{pb:dmep} exists and 
\begin{equation*}
	\lVert \mv_h - \Pi_h \mv \rVert_{\X_h} \leq  C h . \qedhere
\end{equation*}
\end{proof}

Now we can prove our main result Theorem \ref{thm2:conv}, by showing the equivalence of the solutions of \eqref{dMEP:NEB} and \eqref{pb:dmep}.

\begin{proof}
[{\bf Proof of Theorem \ref{thm2:conv}}]
Let $\mv_h$ be a solution of \eqref{pb:dmep}. 
Note that the saddle $y_{\rm S}$ reaches the energy maximum along the MEP.
Then we have from the convergence result in Theorem \ref{thm:conv} that for sufficiently small $h$,
\begin{equation*}
    E(\mv_{h,0}) < E(\mv_{h,1}) < \cdots <E(\mv_{h,\ds})> E(\mv_{h,\ds+1})>\cdots>E(\mv_{h,M}) .
\end{equation*}
By comparing \eqref{eq:upwind} and \eqref{def:Dh}, we see that these two first-order difference schemes of $\mv_h$ are exactly the same, i.e., $\tilde{D}_h\mv_h = D_h\mv_h$. 
This together with \eqref{eq:eqdis} yields that $\mv_h$ is a solution of \eqref{dMEP:NEB} and it remains to show the estimates \eqref{est2:dismep} and \eqref{est:Eb}.

From \eqref{est:Dh} and Theorem \ref{thm:conv}, we estimate
\begin{align*}
& \max_{0\leq k\leq M}\Big( |(D_h\mv_h)_k - \mv'(\alpha_k)| + |\mv_{h,k} - \mv(\alpha)| \Big)
\\[1ex]
& \qquad \leq C \big( |\tilde{D}_h\mv_h - \Pi_h\mv'|_{\infty} + |\mv_h - \Pi_h\mv|_{\infty} \big)
\\[1ex]
& \qquad \leq C |\tilde{D}_h(\mv_h - \Pi_h\mv)|_{\infty} +  C | \tilde{D}_h\Pi_h\mv - \Pi_h\mv'|_{\infty} + C|\mv_h - \Pi_h\mv|_{\infty}
\\[1ex]
& \qquad \leq C |D^{-}_h( \mv_h - \Pi_h\mv) |_{\infty} + Ch + C|\mv_h - \Pi_h\mv|_{\infty} \leq Ch ,
\end{align*}
which completes the proof of \eqref{est2:dismep}.

Finally, we turn to the error estimate for the energy barrier. 
Using the fact that $\nabla E\big(\mv(\sad)\big) = 0$ and $E\in C^4(\R^N)$, we have
\begin{equation}
\label{eq:err_Eb1}
\big| \Delta E(\mv) - \Delta_h E(\mv_h)\big| = \big| E\big(\mv(\sad)\big) - E(\bar{\varphi}_{h,\ds})\big| 
\leq C | \mv(\sad) - \mv_{h,\ds})|_{\infty}^2 ,
\end{equation}
where $C$ is a constant depending only on $E$ and $\mv$.
Note that Theorem \ref{thm:conv} and the fact that $\mv\in C^3\big([0,1];\R^N\big)$ implies
\begin{equation*}
| \mv(\sad) - \mv_{h,\ds})|_{\infty} \leq | \mv(\sad) - \mv(\alpha_{\ds})|_{\infty} + | \mv(\alpha_{\ds}) - \mv_{h,\ds}) |_{\infty} \leq C h,
\end{equation*}
which together with \eqref{eq:err_Eb1} yields \eqref{est:Eb} and completes the proof.
\end{proof}

\small
\bibliographystyle{plain}
\bibliography{bib}

\end{document}